\theoremstyle{plain}
\newtheorem{theorem}{Theorem}
\newtheorem{proposition}{Proposition}
\theoremstyle{definition}
\newtheorem{definition}{Definition}
\newtheorem{example}{Example}
\newtheorem{remark}{Remark}
\theoremstyle{plain}
\newtoks\thehProclaim
\newtheorem*{Proclaim}{\the\thehProclaim}
\theoremstyle{definition}
\newtoks{\thehRemark}
\newtheorem*{Remark}{\the\thehRemark}
\begin{document}

\title{Extremal Area of polygons sliding along curves}

%\author{KPS, Luminy 2018 \& 2020}
\author{Dirk Siersma}

\address{Mathematisch Instituut  Universiteit Utrecht}

%\email{gaiane-panina@rambler.ru}

\subjclass[2020]{58K05,57R70}

\keywords{Polygons, area, critical point, Morse index, billiard}

\begin{abstract}
In this paper we study the area function  of  polygons, where the vertices are  sliding along curves. We give geometric criteria for the critical points and determine also the Hesse matrix at those points. This is the starting point for a Morse-theoretic approach, which includes the relation with the topology of the configuration spaces. Moreover the condition for extremal area gives rise to a new type of billiard: the inner area billiard.
\end{abstract}

%\date{\today}

\maketitle

\section{Introduction}

The study of extremal positions of geometric figures has a long tradition. Well known is the {\it Isoperimetric Problem}: Determine the maximal  area of a plane figure with given perimeter, see e.g. the historical overview of
Bl\r{a}sj\" o  \cite{isoper}.

\smallskip
\noindent
In this article we focus on polygons, where each vertex slides along its given curve. For triangles this has been studied  more than 100 years ago by E. B. Wilson \cite{wilson} , which gave a geometric criterion for a triangle with maximal area.

\smallskip
\noindent
We consider arbitrary curves, which are (piecewise)  $C^{2}$ and develop a  general theory for critical polygons of the area function and their Morse theory. In this way we consider all  critical polygons and not only maxima and minima.
We formulate first results for disjoint curves, but later we also treat  intersecting curves and even polygons which have all vertices on a single curve.  As long as vertices don't coincide there is no difference.

\smallskip
\noindent
First we determine in Theorem  \ref{t:crit_va} the condition for a critical polygon: The tangent line at a vertex is parallel to the (nearest) small diagonal or two neighbouring vertices coincide. Next we compute in Proposition \ref{hessn} the Hesse matrix at a critical polygon. This matrix depends only on the vertices and on the curvature at the vertices of the critical polygon. We apply this to examples, containing lines or circles.

In section \ref{s:birth} discuss the birth and death of circles originating from a point (considered as a constant curve).
We show that generically a critical point in the original setting gives rise to two critical points in the new setting and compare the Morse indices.

In section  \ref{s:allonone} we discuss polygons, where all vertices are on a single curve. As long no vertices coincide we can use the theory of the first sections. Degenerate polygons (e.g. all vertices coincide)  are examples of critical points, which can produces non-isolated singularities.  We also discuss the `adding of a zig-zag'  and its effect on the Morse indices.

In section \ref{s:piecewise} we pay attention to the piece-wise differentiable case. Clark subdifferential replaces the usual derivative and tangent cones replace tangent lines. The case of piecewise straight lines (e.g polygons) is an important issue in computational geometry.

In section \ref{s:tsliding} we give  a short introduction to tangential sliding and the conditions for critical area in that case.

We close in section \ref{s:billiard} with a proposal for a new billiard: {\it The Inner Area Billiard.}  The billiard rules follow the conditions for critical points of the (vertex) area function:  Every vertex $P_{k+1}$ is constructed by intersecting the boundary curve of the billiard table with the ray from  vertex $P_{k-1}$ parallel to the tangent line in vertex $P_k$ . This resembles  both the usual (perimeter) billiard as the outer (area) billiard. This (as a starting point) rises several billiard type questions.

\smallskip
\noindent
 Note that area functions are affine invariants, so statements stay valid after  an affine transformation.

\smallskip
\noindent
The Morse theoretic approach has been already carried out for the signed area function on linkages with given edge
length \cite{khipan}, \cite{ks1}, \cite{ks}, \cite{panzh} and more recently in the context of the isoperimetric problem \cite{KPS}.
The case of polygons with vertices on a single ellipse is treated in \cite{Si-circle}.

This paper originated from many discussions with Gaiane Panina and George Khimshiasvili during several `Research in Residence' visits at CIRM in  Luminy.  I wish to thank both and moreover the CIRM and the Mathematical Department of Utrecht University for the good working atmosphere.

%%%%%%%%%%%%%%%%%%%%%%%%%%%%%%%%%%
\section{Vertex Sliding of polygons}\label{s:general}
%%%%%%%%%%%%%%%%%%%%%%%%%%%%%%%%%%

\subsection{Critical Points}

We consider a set of curves $C_1,\cdots, C_n$, embedded  in the plane.   Each curve is given by a parametrization $C_i(t_i)$. 

\begin{figure}[h]
\begin{center}
\includegraphics[height=5cm]{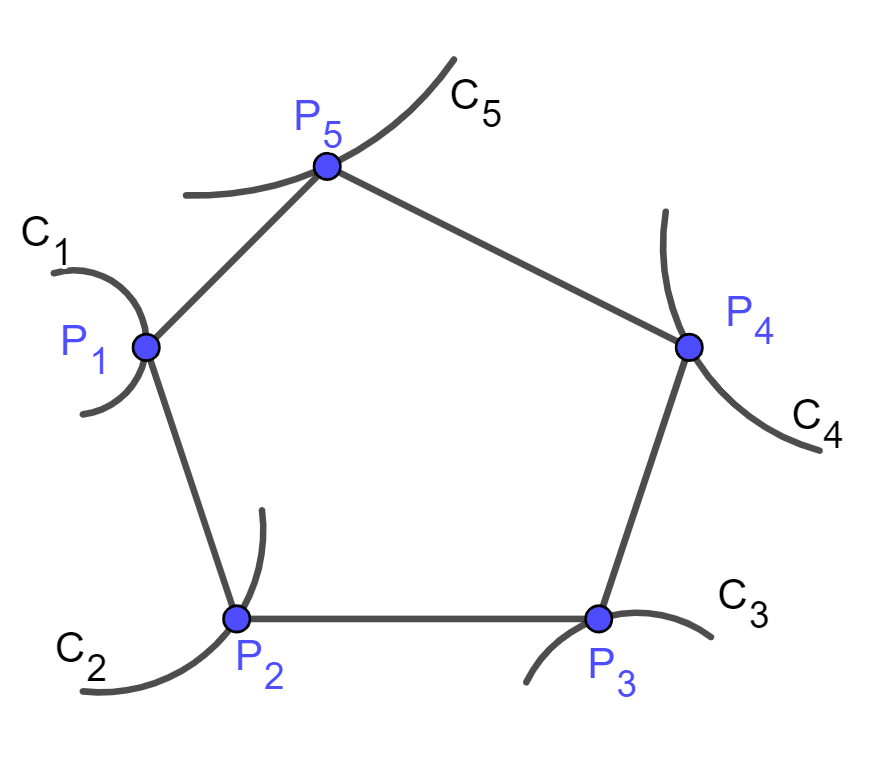}
\caption{Vertex sliding.}
\label{fig:vertexsliding}
\end{center}
\end{figure}

We denote by $C^{'}_i$ the first derivative, by $ C^{''}_i$ the second derivative, by $T_i$ the unit tangent vector, by $N_i$ the unit normal  satisfying $T_i \times N_i =1$ and by  $\kappa_i$ the curvature. 
We use the convention that we write indices modulo $n$.

\noindent
On the product of the source spaces of the curves we define 
for every set of points $P_i = C_i(t_i)$ the signed area function   $\mathcal A$ as (2-times)  the signed area of the polygon $\mathcal{P}$ with vertices $P_1,\cdots, P_n$ (in that order) with coordinates $P_i =(x_i,y_i)$ by
  $$ \mathcal A = \sum_{i=1}^n P_i \times P_{i+1} = x_1 y_2 - x_2 y_1 +  \cdots  +x_n y_1 - x_1 y_n. $$
\noindent
We have the following condition for critical points of  $\mathcal A$:

\begin{theorem} \label{t:crit_va}
Let $C_1,\cdots, C_n$ be smooth  curves in the plane.\\
$\mathcal A$ has a critical point at the polygon $P_1 \cdots P_n$ iff 
$ C_i^{'} \times ( C_{i+1} - C_{i-1}) = 0 $ for all $i$   at $(t_1,\cdots,t_n).$ 
This means:
 \begin{itemize}
\item $P_{i+1}=P_{i-1}$  or
\item $T(P_i) \parallel  P_{i-1}P_{i+1}$
\end{itemize}
In case of disjoint smooth curves only the parallel conditions apply.\\
\end{theorem}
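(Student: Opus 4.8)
The plan is to compute the partial derivatives of $\mathcal A$ directly and then read off the vanishing conditions geometrically. First I would observe that in the sum $\mathcal A = \sum_{k=1}^n P_k \times P_{k+1}$ the parameter $t_i$ enters only through the two consecutive terms $P_{i-1}\times P_i$ and $P_i \times P_{i+1}$, since $P_i = C_i(t_i)$ while all other vertices are independent of $t_i$. Writing $u \times v = u_x v_y - u_y v_x$ for the scalar cross product, which is bilinear and antisymmetric, the chain rule gives
$$\frac{\partial \mathcal A}{\partial t_i} = P_{i-1}\times C_i' + C_i' \times P_{i+1} = C_i' \times P_{i+1} - C_i' \times P_{i-1} = C_i' \times (C_{i+1}-C_{i-1}),$$
all quantities evaluated at $(t_1,\dots,t_n)$. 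Hence $\mathcal A$ is critical exactly when these $n$ expressions vanish simultaneously, which is the stated condition.

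Next I would translate this into the geometric statement. Here I would use that the curves are regular, so that $C_i'$ is a nonzero multiple of the unit tangent $T_i = T(P_i)$. A scalar cross product of two vectors in the plane vanishes iff they are linearly dependent; therefore $C_i' \times (C_{i+1}-C_{i-1}) = 0$ means either $C_{i+1}-C_{i-1} = 0$, i.e.\ $P_{i+1}=P_{i-1}$, or else $C_{i+1}-C_{i-1}\neq 0$ is proportional to $C_i'$, i.e.\ the tangent line at $P_i$ is parallel to the segment $P_{i-1}P_{i+1}$. This gives the two bulleted alternatives.

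Finally, for the disjoint case I would argue that, assuming $n\geq 3$ so that $i-1\not\equiv i+1 \pmod n$, the points $P_{i+1}\in C_{i+1}$ and $P_{i-1}\in C_{i-1}$ lie on disjoint curves and hence cannot coincide, so only the parallel alternative survives. I do not expect a genuine obstacle here: the computation collapses cleanly once antisymmetry of the cross product is used, and the only points requiring a little care are the regularity hypothesis on the parametrizations (which is what makes the tangent-line reformulation meaningful) and the index bookkeeping modulo $n$.
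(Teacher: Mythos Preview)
Your proof is correct and follows essentially the same route as the paper: compute $\partial \mathcal A/\partial t_i$ directly from the cross-product formula, use antisymmetry to rewrite $C_{i-1}\times C_i' + C_i'\times C_{i+1}$ as $C_i'\times(C_{i+1}-C_{i-1})$, and then interpret the vanishing geometrically. Your write-up is in fact slightly more explicit than the paper's (you spell out the regularity hypothesis, the linear-dependence interpretation of the vanishing cross product, and the $n\geq 3$ caveat for the disjoint case), but the argument is the same.
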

\begin{proof}
$$ \mathcal A = C_1(t_1) \times C_2(t_2) + C_2(t_2) \times C_3(t_3) + \cdots + C_n(t_n) \times C_1(t_1). $$
Here $ \times$ denote the cross product.
The partial derivatives with respect  to $t_i$ must be 0:
$$ C_{i-1} \times C_i^{'} + C_i^{'} \times C_{i+1} = C_i^{'} \times ( C_{i+1} - C_{i-1}) = 0 .$$
Since the curves are disjoint and smooth the statement follows.
\end{proof}
\begin{figure}[h]
\begin{center}
\includegraphics[height=4cm]{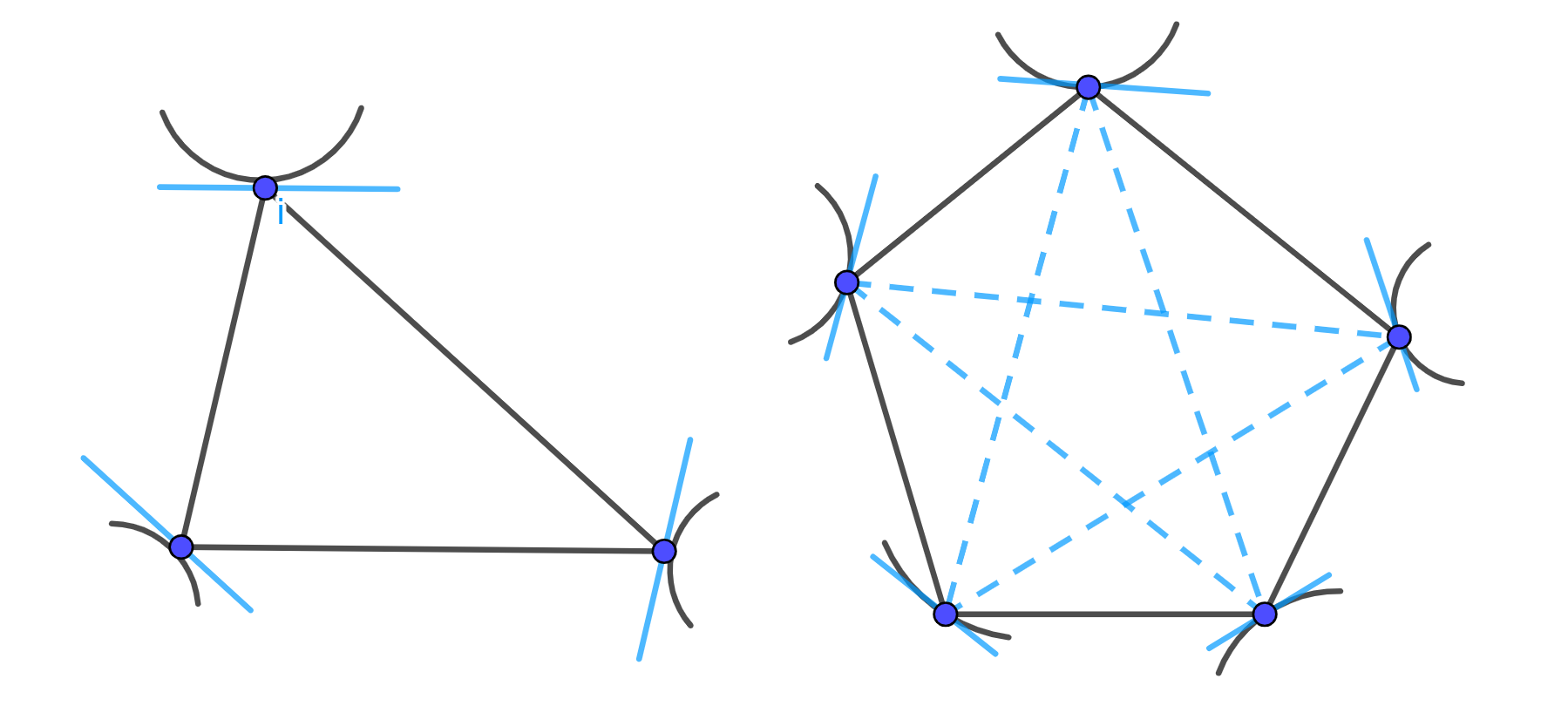}
\caption{Critical points of $\mathcal{A}$.}
\end{center}
\label{fig:vsCritical}
\end{figure}
\smallskip

\begin{remark}
{\bf a.}
If curves $C_{i-1}$ and $C_{i+1}$ intersect, then the intersection point $P_{i-1}=P_{i+1}$ together with the remaining parallel conditions define critical points. 
When the curves intersect transversally the effect is not significant. If the curves are tangent  and $\mathcal A$  is critical with $P_{i-1}=P_{i+1}$ then as a consequence the points $P_{i-2}, P_{i}, P_{i+2}$ must be collinear.

\noindent
Examples $n=3$: A transversal intersection of two curves will never occur in a critical triangle; unless
all 3 curves intersect in that point. But if the two curves are tangent in $P_1=P_3$  and $P_2$ is an intersection point of the tangent line with $C_2$ then the `triangle' $P_1P_2P_3$ is a critical point. See Figure \ref{fig:3singular}.

\begin{figure}
	\centering
		\includegraphics[width=0.80\textwidth]{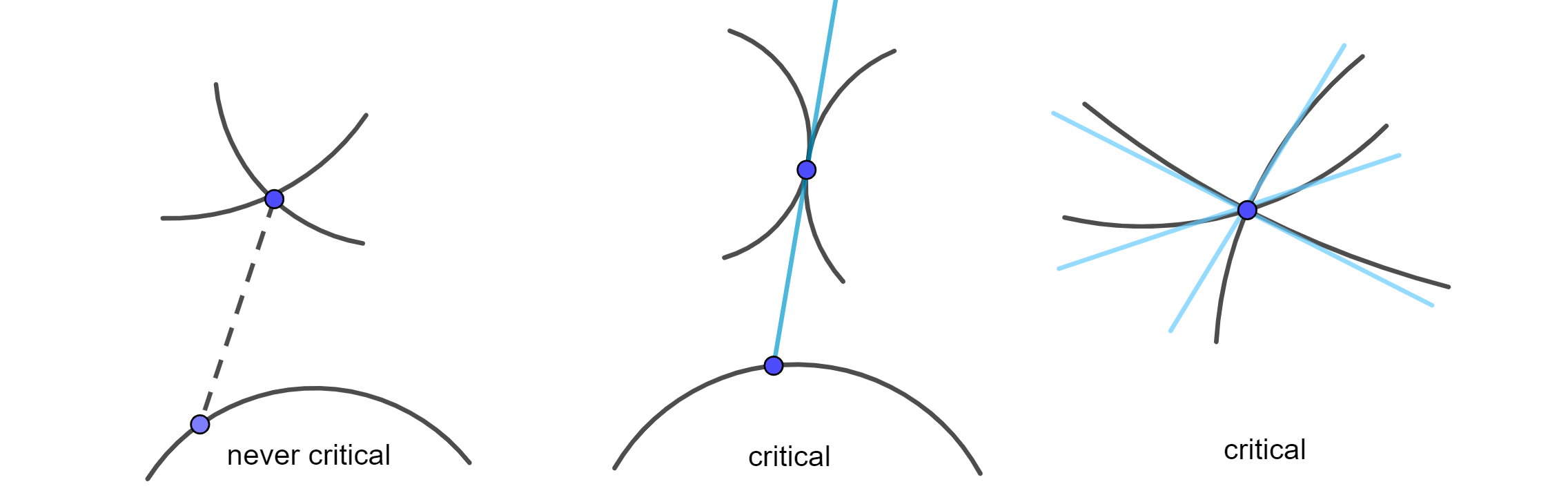}
		\caption{Singular triangles.}
	\label{fig:3singular}
\end{figure}

\smallskip
\noindent
{\bf b.} A special case is: one or more curves coincide. We will meet this in section \ref{s:allonone}.

\smallskip

\noindent
{\bf c.} It is also possible to apply the theorem in cases that one of the curves is a point. We are just left with the other partial derivative conditions.  See \ref{ss:pointlike} .
\end{remark}

\medskip
\subsection{The Hessian}
Critical points are determined by first order information about  the curves (tangent lines).
Next we focus on the second order information.

\begin{proposition}\label{hessn}

The Hesse matrix of  $\mathcal A$  is corner tridiagonal
$$     H =        \begin{pmatrix}
               a_1  &  b_1   &   0  &  \cdots & 0 &  b_n \\
              b_1  &  a_2   &  b_2 & \cdots & 0&   0\\
						0 & b_2 & \ a_3& \cdots & \cdots  & \cdots\\
						\dots & \cdots & \cdots  & \cdots &  \cdots & \dots \\
\cdots & \cdots  & \cdots &  a_{n-2} & b_{n-2} &  0 \\
						0 & 0 &  \cdots & b_{n-2} &  a_{n-1} & b_{n-1} \\
               b_n  & 0  & \cdots & 0 &  b_{n-1}   & a_n\\
             \end{pmatrix}
						$$
where $a_i =  C_i^{"} \times (C_{i+1}- C_{i-1}) $ and $b_i= C_i^{'} \times C_{i+1}^{'}$. 

\end{proposition}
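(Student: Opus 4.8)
The plan is to compute the second partial derivatives of $\mathcal A$ directly from the expression
$$ \mathcal A = \sum_{i=1}^n C_i(t_i) \times C_{i+1}(t_{i+1}), $$
starting from the first derivatives already obtained in the proof of Theorem \ref{t:crit_va}, namely $\partial \mathcal A/\partial t_i = C_i^{'} \times (C_{i+1} - C_{i-1})$. The key observation is that this first derivative involves only the variables $t_{i-1}, t_i, t_{i+1}$, so differentiating again with respect to $t_j$ gives zero unless $j \in \{i-1, i, i+1\}$ (modulo $n$); this is precisely what produces the corner-tridiagonal shape, and the ``corner'' entries $b_n$ appear because indices are read modulo $n$, coupling $t_1$ and $t_n$.

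The main computation has three cases. For the diagonal entry, differentiate $C_i^{'} \times (C_{i+1} - C_{i-1})$ with respect to $t_i$: only $C_i^{'}$ depends on $t_i$, so we get $C_i^{''} \times (C_{i+1} - C_{i-1})$, which is $a_i$. For the off-diagonal entry $\partial^2 \mathcal A/\partial t_i \partial t_{i+1}$, differentiate with respect to $t_{i+1}$: only the $C_{i+1}$ term contributes, giving $C_i^{'} \times C_{i+1}^{'}$, which is $b_i$. The remaining case $\partial^2 \mathcal A/\partial t_i \partial t_{i-1}$ should by symmetry of the Hessian equal the entry $b_{i-1}$ computed from the $(i-1)$-st row; one checks directly that differentiating $C_i^{'} \times (C_{i+1} - C_{i-1})$ with respect to $t_{i-1}$ gives $-C_i^{'} \times C_{i-1}^{'} = C_{i-1}^{'} \times C_i^{'} = b_{i-1}$, confirming consistency and that $H$ is genuinely symmetric. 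I would then simply read off the matrix, noting that the $(1,n)$ and $(n,1)$ entries are $b_n = C_n^{'} \times C_1^{'}$.

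One subtlety worth a remark: the formula $a_i = C_i^{''} \times (C_{i+1} - C_{i-1})$ as written uses the second derivative of the parametrization, not the curvature; to get the ``depends only on vertices and curvature'' statement advertised in the introduction one restricts to a critical point and rescales to arc-length (or uses the Frenet decomposition $C_i^{''} = (C_i^{''}\cdot T_i)T_i + \kappa_i |C_i^{'}|^2 N_i$), since at a critical point $C_{i+1}-C_{i-1}$ is parallel to $T_i$ only when $P_{i+1}\neq P_{i-1}$ — but for the Proposition as stated, which claims the matrix entries in terms of $C_i^{''}$ and $C_i^{'}$, no criticality is needed and the proof is just the chain rule. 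The only mild obstacle is bookkeeping the modular indices carefully so the corner entries land in the right place; there is no real analytic difficulty.
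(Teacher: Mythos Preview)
Your proof is correct and follows exactly the approach implicit in the paper: the Proposition is stated without a separate proof there, being an immediate computation from the first-derivative formula $\partial \mathcal A/\partial t_i = C_i^{'}\times(C_{i+1}-C_{i-1})$ established in Theorem~\ref{t:crit_va}. Your additional remark distinguishing the parametrization-dependent entries from the curvature-only description at a critical point (under arc-length) is accurate and matches the discussion the paper gives immediately after the Proposition.
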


\noindent
	Note that the matrix elements are $0$ as soon as $|i-j| > 1$. {\it Each of the entries are geometric.}  If we have parametrization via arc length,
	then 
\begin{center}  $b_i = T_i \times T_{i+1} = \sin \alpha_i$  
	 and 
	 $a_i  =C^{"}_i \times   P_{i-1}P_{i+1} = P_iM_i \times  P_{i-1}P_{i+1} $ , \end{center}
where $\alpha_i = \angle(T_i,T_{i+1})$  and $M_i$ is the center of curvature of $C_i $ at the point $P_i$. The two vectors in the second product are orthogonal in a critical point.  Moreover, if we give the tangent line at $P_i$ to $C_i$ the  same orientation as  $P_{1-1}P_{i+1}$ then there is the following {\em sign rule: }  $a_i > 0 $ if  $M_i$ is on the left side of the tangent line and $a_i < 0$ if $M_i$ is on the right side.
This description does not depend on the orientation of the curve $C_i$.  Also  $a_i = \kappa N_i \times P_{i-1}P_{i+1} = 
 - \kappa_i \epsilon_i l_i$, where $l_i = | C_{i+1}-C_{i-1}|$ and $\epsilon_i=+1$  depending on the sign of $a_i$.\\
NB.  The sign of $\kappa$ and the vector $N$ depend on the orientation of the curve. If one changes orientation then we get the opposite sign.   If we change orientation of one or more curves the Hesse matrix will change by a coordinate transformation of a quadratic form. The sign of the determinant, index and signature will not change. 

\medskip
\noindent
NB. In section \ref{ss:hess} we comment on the index of the critical point and how this depends on the positions of the centers of curvature.
 						
\medskip
\noindent
NB. A  symmetric  and tridiagonal matrix  (with corners)  is quite common in circular systems with neighbouring point interaction.

\medskip
\noindent
$\mathcal A$ is generically  a Morse function. This can already be arranged by translations only:
\begin{proposition}
Given $n$ vectors $a_1,\cdots,a_n$, which span the 2-dimensional plane. 
\begin{itemize}
\item
Then $\mathcal A$, defined on  the translated curves $C_1 + s_1 a_1, \cdots , C_n +s_na_n$  is Morse for almost every parameter value $s =(s_1,\cdots,s_n)$.
\item
If all curves are compact and $\mathcal A$ is Morse for $s=0$ then there is a  $\rho > 0$ such that $\mathcal A$ is also Morse for all $|s|< \rho$.
\end{itemize}
\end{proposition}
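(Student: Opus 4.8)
The plan is to treat both assertions via a Sard/transversality argument for the first and a compactness-plus-openness argument for the second. For the first bullet, I would introduce the ``augmented'' map $F\colon \prod_i(\text{source of }C_i)\times\mathbb{R}^n\to\mathbb{R}^n$ whose $i$-th component is the partial derivative $\partial_{t_i}\mathcal{A}_s = C_i'(t_i)\times\big(C_{i+1}(t_{i+1})+s_{i+1}a_{i+1} - C_{i-1}(t_{i-1}) - s_{i-1}a_{i-1}\big)$, where $\mathcal{A}_s$ denotes the area function on the translated curves $C_j+s_ja_j$. The function $\mathcal{A}_s$ is Morse exactly when the corresponding slice map $t\mapsto F(t,s)$ has $0$ as a regular value. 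By the parametric transversality theorem (Thom/Sard), it suffices to show that $0$ is a regular value of the full map $F$, i.e. that $dF$ is surjective at every zero; then $0$ is a regular value of the slice for almost every $s$, which is the claim.

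The core computation is therefore to check surjectivity of $dF$ along the $s$-directions at a point where $F=0$. Differentiating the $i$-th component in $s_{i-1}$ and $s_{i+1}$ gives $\partial_{s_{i+1}}F_i = C_i'\times a_{i+1}$ and $\partial_{s_{i-1}}F_i = -\,C_i'\times a_{i-1}$, while $\partial_{s_j}F_i=0$ for all other $j$. So the $s$-part of the Jacobian is, up to sign, a ``circulant-like'' bidiagonal matrix with entries $C_i'\times a_j$ for $j=i\pm1$. To see this has full rank $n$, I would argue pointwise: since the $a_j$ span the plane, for any fixed nonzero tangent vector $C_i'$ there is some $a_j$ with $C_i'\times a_j\neq 0$; combined with the possibility of also varying $a_j$ among a spanning set (or, more simply, a direct rank argument on the bidiagonal structure using that consecutive tangent directions cannot all be forced into degeneracy), one gets that the image of $ds F$ already fills $\mathbb{R}^n$. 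The cleanest route may be to note that we are even free to \emph{not} fix the $a_j$ but regard them as additional parameters in a larger family and invoke transversality there, then specialize; but since the statement fixes a spanning $n$-tuple, I would instead verify the bidiagonal rank condition directly. This rank verification is the step I expect to be the main obstacle, because one must rule out a conspiracy in which the cross products $C_i'\times a_{i\pm1}$ vanish along an entire critical locus; the resolution is that at a zero of $F$ the tangent directions $C_i'$ are geometrically constrained (parallel to the relevant small diagonals), and these constraints are generically incompatible with simultaneous degeneracy against a fixed spanning set.

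For the second bullet, assume all $C_i$ are compact and $\mathcal{A}_0=\mathcal{A}$ is Morse. The critical set $Z_s=\{t : F(t,s)=0\}$ is then, for $s=0$, a finite set of points at each of which the Hessian (which by Proposition~\ref{hessn} depends continuously on $(t,s)$, since its entries are polynomial in the curve data) is nondegenerate. By compactness of $\prod_i C_i$ and continuity of $F$ and of its $t$-derivative, the implicit function theorem applied at each point of $Z_0$ gives finitely many neighbourhoods on which critical points persist as nondegenerate critical points for small $\|s\|$; and compactness rules out critical points escaping from or appearing outside these neighbourhoods for small $\|s\|$ (a standard ``no critical points at infinity'' argument, here trivial since the domain is compact: if $\mathcal{A}_{s_k}$ had a degenerate or new critical point $t_k$ with $s_k\to0$, pass to a convergent subsequence $t_k\to t_*$, then $F(t_*,0)=0$ so $t_*\in Z_0$, and nondegeneracy at $t_*$ forces $t_k$ eventually into the IFT neighbourhood with nondegenerate Hessian, a contradiction). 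Hence there is $\rho>0$ with $\mathcal{A}_s$ Morse for $\|s\|<\rho$. I would also remark that the second bullet is essentially the openness half of the first, and that both are special cases of the general principle that Morse functions form an open dense subset; the only content here is that openness and density can be achieved \emph{within the $n$-parameter affine family of translations}, which is exactly what the rank computation in the previous paragraph supplies.
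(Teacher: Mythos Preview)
Your approach matches the paper's: the paper's entire proof is the single sentence ``This follows from the stability and parametric transversality theorem ([GP]),'' so you are invoking exactly the same machinery and in fact supplying far more detail than the author does.

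That said, the gap you yourself flag is real and not disposed of by your hand-wave. The $s$-block of $dF$ is the cyclic matrix with zero diagonal and entries $\pm\,C_i'\times a_{i\pm1}$; this block can genuinely drop rank (for instance, if all $C_i'$ happen to be parallel to a single $a_j$, an entire column vanishes), so surjectivity of the full $dF$ at a degenerate critical point cannot be read off from the $s$-block alone. Your proposed resolution --- that the parallel constraints at a critical point are ``generically incompatible'' with such degeneracy --- is not an argument, and the example of collinear critical configurations (where all tangents are parallel) shows the issue is not vacuous. The paper does not address this either; it simply asserts the conclusion by citation. If you want an honest proof, one route is to enlarge the parameter space to full translations in $\mathbb{R}^{2n}$ (so each curve moves independently in both coordinate directions), verify surjectivity there --- which is easier because for each $i$ the pair $(C_i'\times e_1,\,C_i'\times e_2)$ is nonzero --- and then use that the ``bad'' set of $s$ has measure zero in the larger space, hence meets almost every one-parameter ray $s\mapsto (s_1a_1,\dots,s_na_n)$ in a null set; but even this needs the spanning hypothesis carefully and some Fubini-type care.

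Your argument for the second bullet (compactness plus continuity of the Hessian entries plus the implicit function theorem) is correct and is exactly the ``stability'' half the paper cites.
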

\begin{proof}
This follows from the stability and parametric transversality theorem  (\cite{GP}).

\end{proof}

\subsection{Higher order approximation}
If the critical point is degenerate (non-Morse) the higher order terms become important.
We determine here the third  order terms in the Taylor series.

\smallskip
\noindent
We still consider parametrization by arc length and fix notations: If  $T$ is the unit tangent vector, then the unit normal vector is defined by $T \times N =1 $. In this case $T^{'} = \kappa N$ and $N ^{'}= - \kappa T$.
The terms of order $3$ are: 
$$ \frac{1}{3!} \big( -  \sum_{i=1}^n \epsilon_i  l_i  \dot \kappa_i\; t_i^3  - 3 \sum_{i=1}^n \kappa_i \cos \alpha_i \; (t_i^2 t_{i+1} - t_i t_{i+1}^2) \big)$$

\smallskip
\noindent
This follows from the computation of the 3rd order derivatives:
$$ a_{i i i} = C_i^{'''} \times (C_{i+1} - C_{i-1}) = (\dot \kappa_i N_i - \kappa_i^2 T_i) \times \epsilon _i l_i T_i = - \epsilon_i l_i \dot \kappa_i  $$
$$
a_{i+1 i i} =  C_i ^{''}\times  C_{i+1}^{'} = \kappa_i N_i \times T_{i+1} = - \kappa_i \cos \alpha_i
$$
$$
a_{i+1  i+1  i} = C_i ^{'}\times  C_{i+1}^{''}= \kappa_i T_i \times N_{i+1} = - \kappa_i \cos \alpha_i
$$
$$
a_{i j k} = 0 \; \; \mbox{\rm in all other cases}
$$

%%%%%%%%%%%%%%%%%%%%%%%%%%%%%%%%%%%%%%%%%%%%%%%%
\section{Special cases of vertex sliding}
%%%%%%%%%%%%%%%%%%%%%%%%%%%%%%%%%%%%%%%%%%%%%%%555
\noindent
In this section we present several cases, where the curves are lines or circles.

\subsection{Sliding along straight lines}

The extremal point conditions for $\mathcal A$  depend only on the tangent lines. It turns out that the study of lines
is an important ingredient in the understanding of more general curves.
We first consider the case of 3 lines (with has a surprizing nice answer).

\begin{proposition}
In the case of three lines (not through one point) there are global coordinates such that the area function  is given by $$ \mathcal A  =  3 (t_1t_2 + t_2 t_3 + t_3 t_1 + \tfrac{1}{4}) \mathcal A  (B_1B_2B_3).$$
where $B_1B_2B_3$ is  the triangle formed by the intersection points of the lines.
$\mathcal A$ has exactly one critical point, which is Morse and has index 2. 
\end{proposition}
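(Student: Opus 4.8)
The plan is to bring the problem to an affine normal form and then read off every assertion. The hypothesis that the three lines form a triangle means they are pairwise non-parallel and not concurrent; label them so that $L_i$ is the line through $B_{i-1}$ and $B_{i+1}$ (indices mod $3$), so that $L_{i-1}$ and $L_{i+1}$ are the two sides of $B_1B_2B_3$ meeting at $B_i$. By the affine invariance of $\mathcal A$ recorded in the introduction I may take $B_1B_2B_3$ to be any fixed reference triangle — an equilateral one makes the threefold symmetry manifest — carrying along the scalar $\mathcal A(B_1B_2B_3)$, which I reinstate at the end. On $L_i$ I use the affine parameter $P_i(t_i)=\tfrac12(B_{i-1}+B_{i+1})+t_iv_i$ whose origin is the midpoint of the side $B_{i-1}B_{i+1}$; the common scale of the direction vectors $v_i$ is fixed below.

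Substituting into $\mathcal A=\sum_i P_i\times P_{i+1}$, each summand $P_i\times P_{i+1}$ is affine in $t_i$ and in $t_{i+1}$ separately, so $\mathcal A$ is a polynomial of total degree $2$ containing no squares $t_j^2$; its quadratic part is $\sum_i(v_i\times v_{i+1})\,t_it_{i+1}$. Rescaling the $v_i$ so that the three numbers $v_i\times v_{i+1}$ coincide (possible since, for a consistent traversal, they share a sign) turns the quadratic part into a multiple of $t_1t_2+t_2t_3+t_3t_1$; placing the parameter origins at the midpoints is exactly what removes the linear terms, because the midpoint (medial) triangle is the critical configuration (next paragraph); and the surviving constant is $\mathcal A$ of the medial triangle, i.e. $\tfrac14\mathcal A(B_1B_2B_3)$. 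Fixing the common scale to normalise the coefficient yields the stated formula; the only real work here is bookkeeping the exact numerical constants.

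For the critical point I would invoke Theorem \ref{t:crit_va} directly: the tangent direction $T(P_i)$ of the line $L_i$ is constant and no vertices can coincide, so a configuration is critical iff for each $i$ the chord $P_{i-1}P_{i+1}$ is parallel to $L_i$, i.e. parallel to the side of $B_1B_2B_3$ opposite $B_i$. Writing $P_j=(1-t_j)B_{j-1}+t_jB_{j+1}$ and comparing at each apex $B_i$ via similar triangles, these parallelisms become the cyclic linear system $t_{i-1}+t_{i+1}=1$ $(i=1,2,3)$, with coefficient matrix $J-I$ ($J$ the all-ones matrix) and $\det(J-I)=2\neq0$; hence there is a unique critical point, and solving gives $t_1=t_2=t_3=\tfrac12$, i.e. $P_i=\tfrac12(B_{i-1}+B_{i+1})$ — the medial triangle of $B_1B_2B_3$. (This is also immediate from the normal form, where $\partial\mathcal A/\partial t_i$ is proportional to $t_{i-1}+t_{i+1}$.)

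Finally, for the Morse property and the index I would use Proposition \ref{hessn}. On a line $C_i''=0$, so every diagonal entry $a_i$ vanishes and the (constant) Hesse matrix is the corner-tridiagonal ``hollow'' matrix $\begin{pmatrix}0&b_1&b_3\\ b_1&0&b_2\\ b_3&b_2&0\end{pmatrix}$ with $b_i=C_i'\times C_{i+1}'$ and determinant $2b_1b_2b_3$; since no two of the lines are parallel, all $b_i\neq0$, so the critical point is non-degenerate and $\mathcal A$ is Morse. With a consistent choice of direction vectors the $b_i$ share a sign, positive precisely when $B_1B_2B_3$ (equivalently the critical medial triangle) is positively oriented; a positive diagonal congruence then carries the Hesse matrix to $J-I$, whose eigenvalues are $2$ (on $(1,1,1)$) and $-1,-1$ (on $\{\sum t_i=0\}$), so the Morse index is $2$. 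One should add the caveat that transposing two of the $B_j$ (reversing the orientation of a line) flips the sign of both $\mathcal A(B_1B_2B_3)$ and the Hessian, turning the index into $1$; the statement is read with $B_1B_2B_3$ positively oriented. The only obstacle throughout is computational — pinning down the precise scalars $3$ and $\tfrac14$.
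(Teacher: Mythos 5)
Your proof follows essentially the same route as the paper: the paper's own argument is simply to parametrize each line affinely through the intersection points $B_i$, expand $\mathcal A$ as an inhomogeneous quadratic, and translate to the critical point; your midpoint-centred parametrization, together with the appeal to Theorem \ref{t:crit_va} for uniqueness and Proposition \ref{hessn} for non-degeneracy and the index, is the same computation organized slightly differently, and those parts (including the orientation caveat for index $2$ versus $1$) are correct. Two remarks. First, ``no vertices can coincide'' is not literally true, since $P_{i-1}=P_{i+1}=B_i$ is a possible configuration; however, the cross product $C_i'\times(P_{i+1}-P_{i-1})$ is exactly a nonzero multiple of $1-t_{i-1}-t_{i+1}$, so your linear system is equivalent to the full criterion of Theorem \ref{t:crit_va} (both branches), and the uniqueness conclusion stands. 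Second, the final rescaling cannot actually deliver the displayed constants: the constant term of the normal form is forced to equal the critical value, namely the signed area of the medial triangle, which is $\tfrac14\mathcal A(B_1B_2B_3)$, and no rescaling or translation of the $t_i$ changes an additive constant; with the paper's parametrization $C_i(t_i)=t_iB_{i-1}+(1-t_i)B_i$ one finds, after translating to $t_i=\tfrac12$, that $\mathcal A=(t_1t_2+t_2t_3+t_3t_1+\tfrac14)\,\mathcal A(B_1B_2B_3)$, so the prefactor $3$ in the stated formula is incompatible with the $+\tfrac14$ (keeping the $3$ would force the constant $\tfrac1{12}$). This is a discrepancy located in the statement's normalization rather than in your method, but your ``only bookkeeping remains'' step, if carried out, would expose it rather than confirm the factor $3$; none of this affects the assertions about the unique critical point, the Morse property, or the index.
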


\begin{figure}[ht]
\begin{minipage}[b]{0.45\linewidth}
\centering
\includegraphics[width=\textwidth]{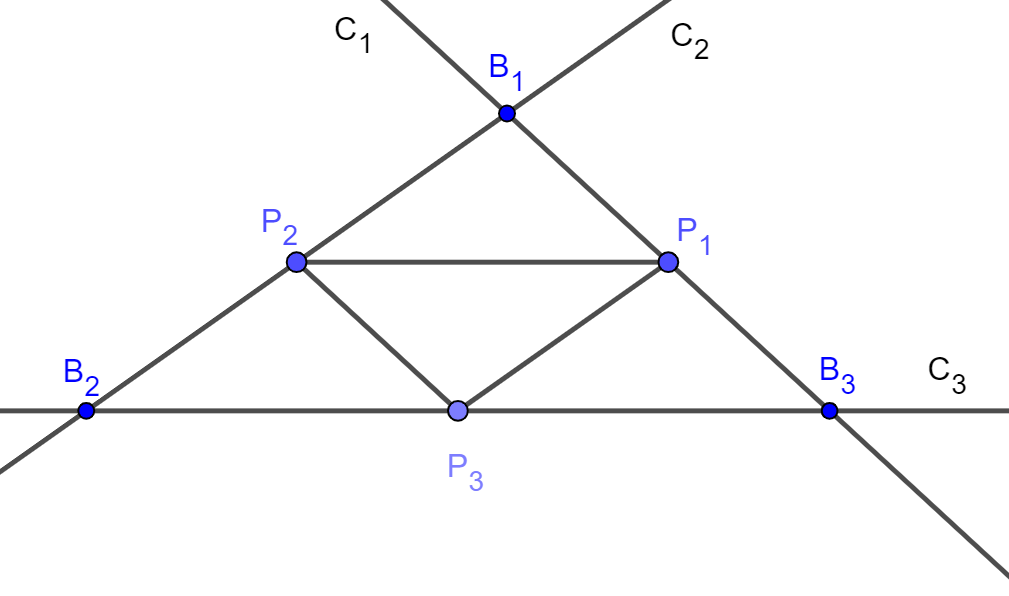}
\caption{Three lines.}
\label{fig:3lines}
\end{minipage}
\hspace{0.5cm}
\begin{minipage}[b]{0.45\linewidth}
\centering
\includegraphics[width=1.0\textwidth]{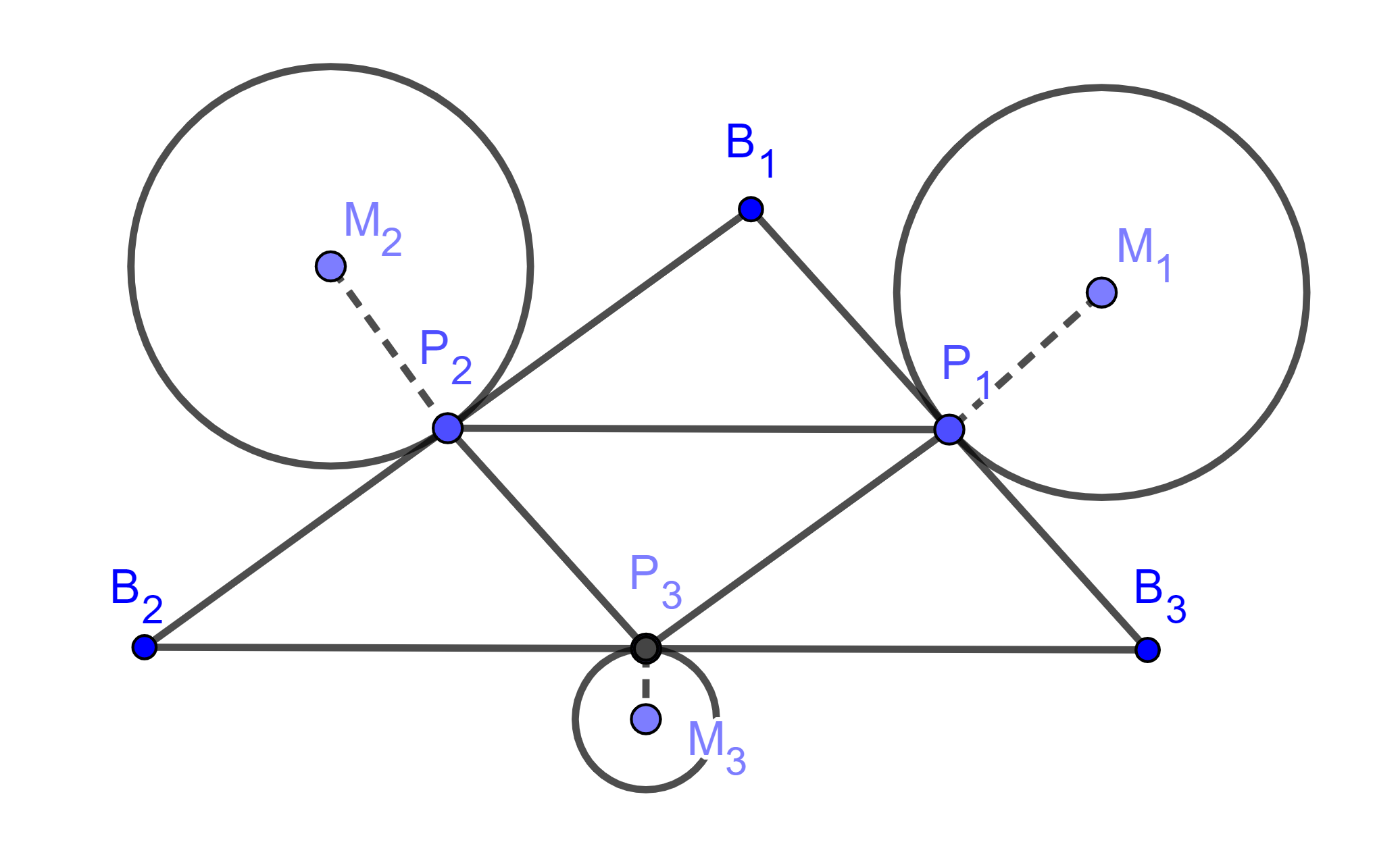}
\caption{Three circles.}
\label{fig:3circleHessian}
\end{minipage}
\end{figure}

\begin{proof}
Let $C_i$ and $C_{i+1}$ intersect in $B_i$. Use parametrization $C_i(t_i) = t_i B_{i-1} + (1-t_i) B_i$ and perform the computation. After a translation in the coordinates one gets the formula in the theorem.
\end{proof}

\medskip
\begin{remark}
In case of 4 lines the parallel conditions imply that opposite sides a parallel (and enclose a parallelogram). In that case there are even infinitely many solutions (non-isolated singular 4-gons).  The study of $n$ lines has its own interest, which will be discussed in a future paper.  
\end{remark}
\subsection{Sliding along circles; Hessian and centers of curvature} \label{ss:hess}

The critical points and their Hesse matrices depend only on the 2-jets of the curves. For the local study of critical points we can therefore replace the curves by circles, centered in  the center of curvature and with radius equal to the radius of curvature. 
 
\subsubsection{Three circles in arbitrary position}
We consider the case of 3 circles with different radii $r_i $ and centers $M_i$. If $\mathcal A$  is critical then the 3 tangent lines at the vertices of the sliding triangle are parallel to the opposite side of the triangle (see Figure \ref{fig:3circleHessian}). Use clockwise orientation of the circles and their tangent lines
.

\begin{proposition}
There exists coordinates such that the Hesse matrix at a critical point  of $\mathcal A$ is:
\begin{equation} \label{eq:hes3}
         \begin{pmatrix}
               -m_1 & s   &  s \\
             s  &  -m_2 & s \\
             s & s & -m_3\\
             \end{pmatrix}
\end{equation}
\noindent
where $l_i$ is the length of the  $i^{th}$ edge of the triangle $P_1P_2P_s$ , $m_i = \kappa_i\epsilon_i l_i^3$  and $s=\frac{1}{2} \mathcal A (B_1,B_2,B_3)$. 

\end{proposition}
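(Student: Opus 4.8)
The plan is to read the result directly off Proposition \ref{hessn}, after choosing a parametrization of the three circles adapted to the critical triangle.

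First I would specialize Proposition \ref{hessn} to $n=3$. For $n=3$ the corner‑tridiagonal pattern degenerates to the full symmetric matrix
$$H=\begin{pmatrix} a_1 & b_1 & b_3\\ b_1 & a_2 & b_2\\ b_3 & b_2 & a_3\end{pmatrix},\qquad a_i=C_i''\times(C_{i+1}-C_{i-1}),\quad b_i=C_i'\times C_{i+1}',$$
evaluated at the critical triangle $P_1P_2P_3$. By Theorem \ref{t:crit_va} the tangent line of $C_i$ at $P_i$ is parallel to the opposite side $v_i:=P_{i+1}-P_{i-1}$ (of length $l_i=|v_i|$), so I may reparametrize each circle affinely, replacing the arc‑length parameter $s_i$ by $s_i/l_i$; equivalently I arrange $C_i'(0)=\pm v_i$, the sign being fixed by giving each circle (and its tangent line) the clockwise orientation specified in the statement. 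This is a linear change of the coordinates $(t_1,t_2,t_3)$ with Jacobian $\operatorname{diag}(l_1,l_2,l_3)$, hence it conjugates $H$ by that diagonal matrix, sending $a_i\mapsto l_i^2a_i$ and $b_i\mapsto l_il_{i+1}b_i$. This rescaling is the only real design choice in the argument.

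Next I would compute the entries in this parametrization, using the geometric identities recorded after Proposition \ref{hessn}. In arc length one has $a_i=-\kappa_i\epsilon_i l_i$, so after the rescaling the diagonal entries become $-\kappa_i\epsilon_i l_i^3=-m_i$, which is the claimed diagonal. For the off‑diagonal entries, $b_i=\sin\alpha_i$ in arc length, hence after rescaling $b_i=l_il_{i+1}\sin\alpha_i=C_i'(0)\times C_{i+1}'(0)=v_i\times v_{i+1}$; since $v_1+v_2+v_3=0$ for a triangle, a one‑line expansion gives $v_1\times v_2=v_2\times v_3=v_3\times v_1$, all equal to the $\mathcal A$‑value of the sliding triangle. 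So the three off‑diagonal entries coincide; call the common value $s$. Finally, to pin $s$ to $\tfrac12\mathcal A(B_1,B_2,B_3)$ I would use that the three tangent lines at $P_1,P_2,P_3$ — being the lines through each $P_i$ parallel to the opposite side — are exactly the sides of the triangle $B_1B_2B_3$ having $P_1P_2P_3$ as its medial triangle; the resulting affine relation between the two triangles converts $\mathcal A(P_1P_2P_3)$ into the stated multiple of $\mathcal A(B_1,B_2,B_3)$.

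The computations above are routine substitutions into Proposition \ref{hessn}; the part that needs genuine care is the bookkeeping of signs and of the numerical constant in the last two steps — verifying that the uniform clockwise orientation of all three circles really makes the three cross products $v_i\times v_{i+1}$ equal, and not merely equal up to sign, and that the affine factor between $P_1P_2P_3$ and $B_1B_2B_3$ produces precisely $s=\tfrac12\mathcal A(B_1,B_2,B_3)$. Everything else follows at once from the already‑established Hessian formula.
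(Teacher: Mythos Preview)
Your approach is exactly the paper's: reparametrize the three circles so that $\|C_i'\|=l_i$ (equivalently $C_i'=\pm v_i$) and substitute into the general Hessian of Proposition~\ref{hessn}; the paper's own proof consists of precisely those two sentences, so your write-up is a faithful and more detailed version of it. Your caveat about the numerical constant is well placed---if you actually carry out the medial-triangle computation you sketch, you will find $s=v_i\times v_{i+1}=\mathcal A(P_1P_2P_3)=\tfrac14\mathcal A(B_1B_2B_3)$ rather than the $\tfrac12$ printed in the statement, so do check that step against a concrete example before trusting the stated factor.
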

\begin{proof}
Use parametrization such that $||C_i^{'}(t_i)||= l_i$ .
Next insert this in the Hesse matrix in Proposition \ref{hessn}.
\end{proof}

The determinant of this matrix is:  \label{eq:det3}
\begin{equation} -m_1 m_2 m_3 + ( m_1 + m_2 + m_3) s^2  + 2 s^3 
\end{equation}
and the eigenvalue equation is:
\begin{equation} \label{eq:ev3}
 -(\lambda +m_1) (\lambda+ m_2)(\lambda + m_3) + ( 3 \lambda + m_1 + m_2 + m_3) s^2  + 2 s^3 = 0
 \end{equation}

\noindent
{\bf Discussion:}
 The index of the Hessian  depends on a relation between all three radii of curvature.  The Hesse matrix is determined by the positions of the 6 points;  namely $P_1,P_2,P_3$ and the centers of curvatures $M_1,M_2,M_3$. A question is: Is there a geometric criterion in terms of these points, which gives the index or tells when the critical point is non-Morse ?  Other questions are:
What happens for big and small radii? What if the 3 points lie on  the inscribed circle of the triangle defined by the tangent lines ? 

\smallskip
\noindent
 The above  matrix and formula for the determinant imply already some corollaries:
 \begin{itemize}
 \item If all $|m_i|  > > 0$ then the term $- m_1 m_2 m_3$ is dominant in the Hessian determinant: So for circles with very small radius this has de effect on the eigenvalues. They are determined  by the signs of $\kappa_i$. 
 \item If all $m_i=0$ then we have saddles (see the section on 3 lines); and this is still the case for very small values of the curvature.
 \end{itemize}
 
\subsubsection{Bifurcations}
 What can be said about the bifurcation theory for three arbitrary circles ? 
 
 We start with a triangle $P_1P_2P_3$. We will use the parallels $P_{i-1}P_{i+1}$ through $P_i$ as future tangent line to the circles. The centers $M_1,M_2,M_3$ are situated at distances $r_i$ on the perpendiculars at $P_i$ to these lines.  We consider the three circles $(M_i,r_i)$. They are indeed tangent to the mentioned lines.
 Note that for all values of $r_i$ the polygon $P_1P_2P_3$ is a critical polygon. 

\smallskip
\noindent Consider as first example  the following 1-parameter family of circles:
 Fix  $r_1$ and $r_2$ and let $r_3$ vary.  The vanishing of the Hessian determinant \eqref{eq:det3} gives us 
 (under the condition  $m_1 m_2  \neq s^2$)  exactly one bifurcation value $m_3^b$ for $m_3$.  To be more precise:
 $$ m_3 =      \frac{m_1 s^2 + m_2 s^2 + 2 s^3 }{m_1m_2-s^2} $$
 For this value $m_3^b$  the Hessian determinant (evaluated for the polygon $\mathcal P =P_1P_2P_3$ changes sign.  What happens? A computation with the 3-jet of $\mathcal A$ shows, that after a coordinate transform we get the family:
 $ - m_1 x^2 - m_2  y^2 + \omega z^2 +  z^3$, where $\omega$ measures the (signed)  difference $m_3- m_3^b$. This means, that $\mathcal A$  has a critical point of type $A_2$ for that value. In the family a second critical polygon meets our critical polygon at the bifurcation value and moves away after that, while both change to the opposite index.

 \medskip
\noindent
 Next we consider the family where $m_1=m_2=m_3=m$. In that case formula \eqref{eq:det3} for the Hessian determinant becomes:
 \begin{equation} \label{eq:det3eq}
   - m^3  + 3 m s^2 + 2 s^3
 \end{equation}
 This happens e.g. when in the above description  $\mathcal P$ is equilateral and all radii equal: $r_1=r_2=r_3=r$.
The Hessian determinant is zero in 2 cases:
$$  m =2s    \;  , \;   m =- s  \mbox{\rm (double root)}  $$
At these two bifurcation values, the first corresponds to the case that all three circles coincide with the inscribed circle of the triangle  and  $\mathcal A$ has a non-isolated singularity (of type $A_{\infty}$). The second to a singularity of corank 2 (type $D$). \\
The eigenvalue equation  becomes:
$$ - (\lambda + m  - 2s)(\lambda + m+ s^2 = 0 $$
This determines all the Morse indices of $\mathcal A$ at the critical polygon $\mathcal P$.

\subsubsection{Four circles in arbitrary position}
We consider the case of 4 circles with different radii $r_i $ and centers $M_i$. If $\mathcal A$  is critical we have that the  tangent lines at the vertices of the sliding 4-gon are parallel in pairs to the two diagonals of the quadrilateral.  Consider such a situation (see Figure \ref{fig:4critical}).
\begin{figure}[h]
	\centering
		\includegraphics[width=0.50\textwidth]{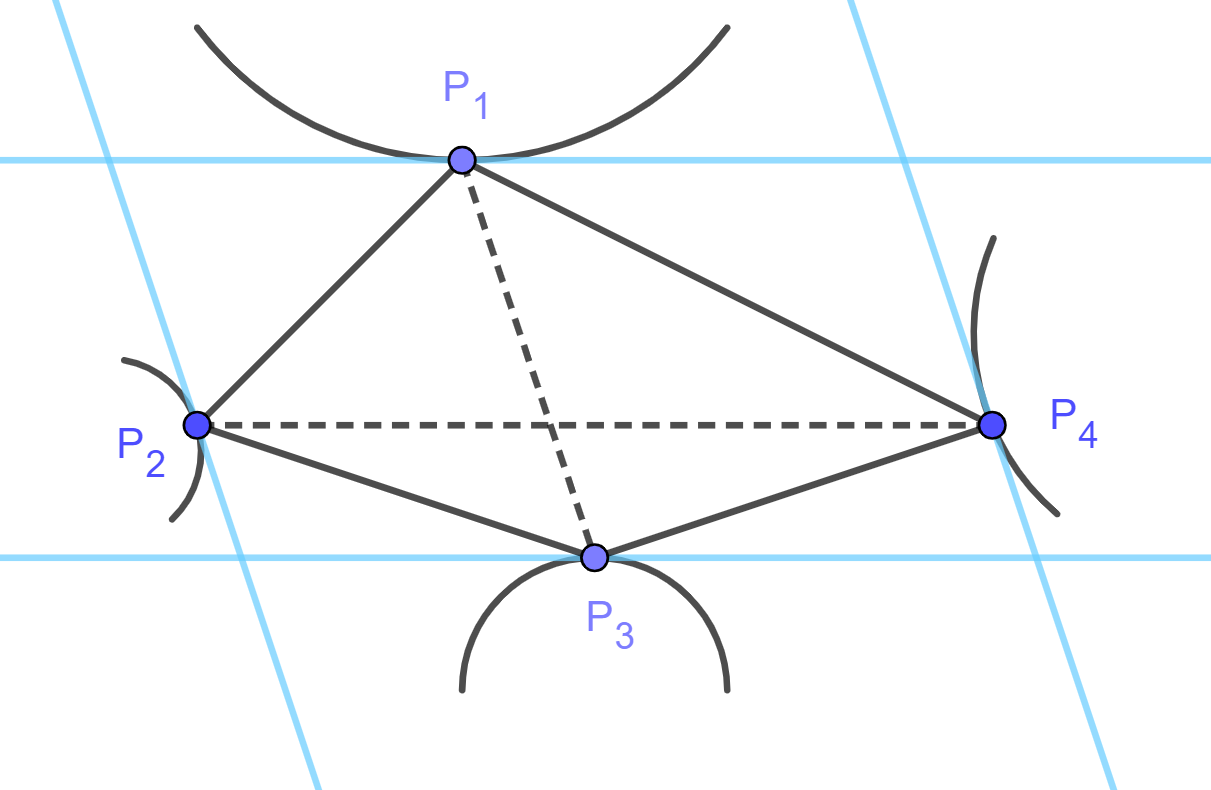}
	\caption{Critical with 4 curves.}
	\label{fig:4critical}
\end{figure}

\begin{proposition}
There exists coordinates  such that the Hesse matrix is:
$$             \begin{pmatrix}
               -m_1 & s   & 0 & s\\
             s  &  -m_2 & s & 0 \\
             0 & s & -m_3 & s \\
						 s & 0 & s & -m_4\\
             \end{pmatrix}
$$
where  $m_i = -\kappa_i  \|P_{i-1}P_{i+1}\|$  and $s= \sin \alpha_{i,i+1}$  where $\alpha_{i,i+1} = \angle(T_i,T_{i+1}).$  
\end{proposition}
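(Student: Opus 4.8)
The plan is to read everything off the general Hesse matrix of Proposition~\ref{hessn} and then to spend the freedom hidden in ``there exist coordinates'' on the bookkeeping of signs. By Proposition~\ref{hessn} the Hessian is, in \emph{any} parametrization, the corner\-/tridiagonal matrix with $a_i = C_i''\times(C_{i+1}-C_{i-1})$ on the diagonal and $b_i = C_i'\times C_{i+1}'$ on the cyclic off\-/diagonal, so for $n=4$ the shape of the claimed matrix is automatic; only the \emph{values} of the entries are at stake. First I would parametrize each of the four circles by arc length, so that $C_i''=\kappa_i N_i$. At the critical $4$-gon Theorem~\ref{t:crit_va} gives $T_i\parallel P_{i-1}P_{i+1}$, hence $C_{i+1}-C_{i-1}=\epsilon_i\,l_i\,T_i$ with $l_i=\|P_{i-1}P_{i+1}\|$ and $\epsilon_i\in\{\pm1\}$, so that
$$a_i=(\kappa_i N_i)\times(\epsilon_i l_i T_i)=\epsilon_i l_i\kappa_i\,(N_i\times T_i)=-\epsilon_i\kappa_i l_i,$$
which is $-m_i$ with $m_i=-\kappa_i\|P_{i-1}P_{i+1}\|$, the sign of $\kappa_i$ being read off with respect to the orientations fixed below, exactly as in the sign rule following Proposition~\ref{hessn}.

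The substantive step is a geometric observation particular to $n=4$: the segment $P_{i-1}P_{i+1}$ is alternately the diagonal $P_2P_4$ (for $i=1,3$) and the diagonal $P_1P_3$ (for $i=2,4$), see Figure~\ref{fig:4critical}, so criticality forces $T_1\parallel T_3$ and $T_2\parallel T_4$, the two pairs enclosing one and the same angle $\alpha$, namely the angle between the two diagonals. Writing $T_3=\eta_1T_1$ and $T_4=\eta_2T_2$ with $\eta_1,\eta_2\in\{\pm1\}$, a one\-/line cross\-/product computation gives
$$b_1=T_1\times T_2,\qquad b_2=-\eta_1 b_1,\qquad b_3=\eta_1\eta_2 b_1,\qquad b_4=-\eta_2 b_1,$$
so all four off\-/diagonal entries have absolute value $|\sin\alpha|$ and their cyclic product $b_1b_2b_3b_4=b_1^4$ is a perfect square.

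It remains to make the four off\-/diagonal entries literally equal. Reversing the arc\-/length orientation of the $i$-th circle is the change of coordinates $t_i\mapsto -t_i$, under which the Hessian --- being a quadratic form --- is conjugated by a diagonal matrix $D=\mathrm{diag}(d_1,d_2,d_3,d_4)$ with $d_i\in\{\pm1\}$: the diagonal entries $a_i=d_i^2a_i$ are untouched, while $b_i$ becomes $d_id_{i+1}b_i$. The system $d_id_{i+1}b_i=s$, $i=1,\dots,4$, for a common value $s$, is solvable precisely when the cyclic product $b_1b_2b_3b_4$ is a square, which we have just checked; solving it (there are two solutions, related by a global flip, matching the absence of a canonical orientation of the quadrilateral) turns the Hessian into the displayed matrix with $s=\sin\alpha_{i,i+1}$. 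I expect this sign bookkeeping to be the only real obstacle: one must verify that, running once around the $4$-cycle of edges, the signs of the $b_i$ ``close up'' --- which is exactly the content of the parallelisms $T_1\parallel T_3$, $T_2\parallel T_4$ at a critical $4$-gon --- and that the orientation reversals used to equalize them leave the diagonal entries, hence the $m_i$, unchanged.
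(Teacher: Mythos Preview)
Your approach is essentially the same as the paper's, only spelled out in far more detail. The paper's own proof is three lines: parametrize by arc length, observe the angle relations $\alpha_{12}=\alpha_{34}=\pi-\alpha_{23}=\pi-\alpha_{41}$ (which is exactly your observation $T_1\parallel T_3$, $T_2\parallel T_4$ restated, and which forces all $\sin\alpha_{i,i+1}$ to coincide), and then ``insert this in the matrix of Proposition~\ref{hessn}''. Your explicit sign bookkeeping via the orientation reversals $t_i\mapsto -t_i$ and the check that the cyclic product $b_1b_2b_3b_4=b_1^4$ closes up is more careful than anything the paper writes down; the paper simply takes for granted that the arc-length parametrizations can be oriented so that the angle identity holds.
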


\begin{proof}
Use parametrization of the circles (or curves) by arc length.
It is clear that $\alpha_{12}=\alpha_{34} = \pi - \alpha_{23} = \pi - \alpha_{41}$.
Next insert this in the matrix of Proposition \ref{hessn} .
\end{proof}
The determinant of this matrix  is
$$ m_1 m_2 m_3 m_4 - (m_1 m_2 + m_2 m_3 + m_3 m_4 + m_4 m_1) s^2 $$
For the eigenvalues: replace $m_i$ by $\lambda + m_i$.\\
Specializing to all $m_i$ are equal we get the eigenvalue equation
$$ (m+\lambda)^2 (m + \lambda -2s)(m+\lambda +2s)= 0 $$
%%%%%%%
\subsection{Computations with circles} \label{ss:circles}
Let $M_1,\cdots,M_n$ be the centers of the circles and $r_i$ the corresponding radii.
A point $P_i$ on circle $C_i$ is given by $OM_i + r_i(\cos \alpha_i,\sin \alpha_i)$ and 
$$ \mathcal A = \sum_{i=1}^{n} (OM_i + r_i(\cos \alpha_i,\sin \alpha_i)) \times (OM_{i+1} + r_{i+1}(\cos \alpha_{i+1},\sin \alpha_{i+1})).$$
The usual questions are now:
Determine {\it all} critical points and their index and to test this with the topology of the n-torus. Is  $\mathcal{A}$ a perfect Morse function ?  Because of the complexity  of the computation it was  not possible to find solutions in the general case.
This seems also to be the case if we use Lagrange multipliers. In certain explicit examples  with fixed parameter one can use computer algebra systems for solving.

\smallskip
\noindent
We look now next to some special cases, where we  try to say more. 

\subsection{Concentric circles}

After choosing the common center M  as origin we can use vector notation.
A point $P_i$ on $C_i$ determines the vector $p_i$.
In this case the criterion for critical point reads as follows:
The vector $p_i$ is orthogonal to $p_{i+1}-p_{i-1}$ and this is equivalent to the equality of inner
products
$$p_1 \cdot p_2 = p_2 \cdot p_3 = \cdots = p_n \cdot p_1$$
This gives an equivalent trigonometric criterion in terms of angle-coordinates  with the radii as parameters.\\
The geometric criterion gives in low dimensional cases: (Figure \ref{fig:orthocenter})\\
n=3: The center O is the orthocenter of the triangle $P_1P_2P_3$, \\
n=4: The two diagonals are orthogonal and intersect in the center O. \\
\begin{figure}[h]
	\centering
		\includegraphics[width=0.60\textwidth]{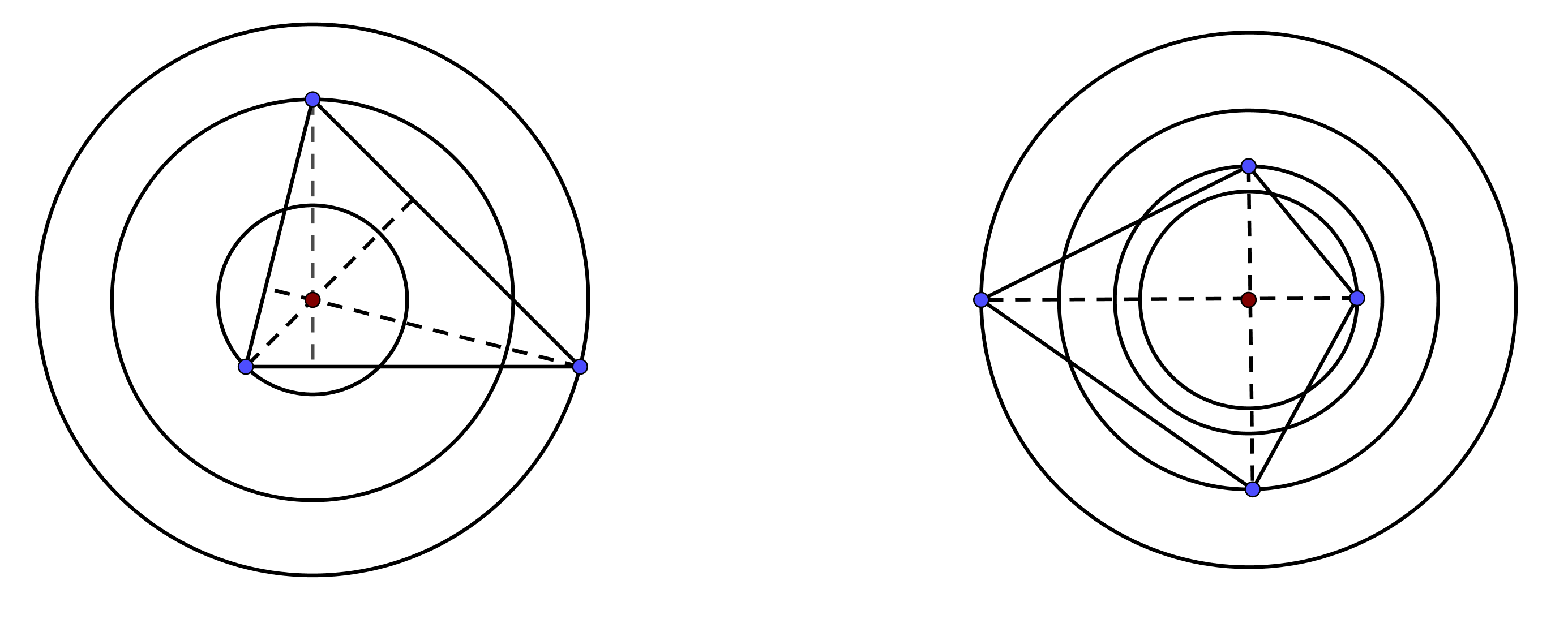}
	\caption{Geometric criterion concentric circles.}
	\label{fig:orthocenter}
\end{figure}
In the case of concentric circles we have a rotation symmetry. We will use the reduced configuration space $(S^1)^{n-1}$. In the (full) configuration space the critical points will appear as product with a circle. The same reduced configuration space occurs as configuration space of $n-1$ concentric circles and a point.

\subsubsection{Three concentric circles}

\begin{proposition}\label{p:3cc}
For 3 concentric circles (with not all radii equal) the area function $ \mathcal A$ is a perfect Morse function,
i.e. has 4 non-degenerate  critical points (1 maximum, 2 saddles  and 1 minimum).
\end{proposition}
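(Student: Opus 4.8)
\emph{Plan.} The statement is about the reduced configuration space, so I would first descend. Using the rotation symmetry noted above, fix $\theta_1$ and set $u=\theta_2-\theta_1$, $v=\theta_3-\theta_2$; then
$$\mathcal A(u,v)=r_1r_2\sin u+r_2r_3\sin v-r_1r_3\sin(u+v)$$
is a smooth function on the $2$-torus $(S^1)^2$. Since $(S^1)^2$ is connected with $\chi=0$, any Morse function on it has at least one minimum and one maximum, and the relation $c_0-c_1+c_2=0$ together with $c_0,c_2\ge 1$ and $c_0+c_1+c_2$ being the total count forces $c_0=c_2=1$, $c_1=2$ (hence $\mathcal A$ is automatically perfect) as soon as that total equals $4$. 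So everything reduces to: (a) $\mathcal A$ has exactly four critical points, and (b) each is non-degenerate.

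\emph{Step (a).} Here I would write out the criterion of Theorem \ref{t:crit_va} (equivalently $p_1\cdot p_2=p_2\cdot p_3=p_3\cdot p_1$): $\partial_u\mathcal A=\partial_v\mathcal A=0$ becomes $r_2\cos u=r_3\cos(u+v)$ and $r_2\cos v=r_1\cos(u+v)$, whence $r_1\cos u=r_3\cos v=:\gamma$. Substituting $\cos u=\gamma/r_1$, $\cos v=\gamma/r_3$, and $\sin u\sin v=\gamma(\gamma-r_2)/(r_1r_3)$ (from $\cos(u+v)=\cos u\cos v-\sin u\sin v$), the identity $\sin^2u\,\sin^2v=(1-\cos^2u)(1-\cos^2v)$ collapses to the single cubic
$$g(\gamma):=2r_2\gamma^3-(r_1^2+r_2^2+r_3^2)\gamma^2+r_1^2r_3^2=0.$$
Conversely each root $\gamma$ with $|\gamma|\le\min(r_1,r_3)$ lifts back: $\cos u,\cos v$ and $|\sin u\sin v|$ are determined, leaving exactly two admissible sign choices, i.e. two points of $(S^1)^2$, both of which satisfy the original equations. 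It then remains to count the admissible roots of $g$: assuming e.g. $r_3\le r_1$, one has $g(0)=r_1^2r_3^2>0$, $g(-r_3)=-r_3^2(r_2+r_3)^2<0$ and $g(r_3)=-r_3^2(r_2-r_3)^2\le 0$, so (as $g$ is cubic with positive leading coefficient) exactly one root lies in $(-r_3,0)$, one in $(0,r_3]$, and the third is $>r_3$; hence exactly two admissible roots and exactly four critical points. The borderline configurations where two radii coincide (one root lands at $\gamma=\min(r_1,r_3)$, which corresponds to a pair $P_iP_{i+1}$ and to $\sin v=0$ or $\sin u=0$) are absorbed by the same sign count, still yielding two critical points per root.

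\emph{Step (b).} I would feed the critical relations into the Hesse matrix of Proposition \ref{hessn}; a direct computation gives, at a critical point with parameter $\gamma$,
$$\det H=\frac{r_2}{\gamma}\bigl(r_2\gamma^3-r_1^2r_3^2\bigr).$$
If $\det H=0$ then $r_2\gamma^3=r_1^2r_3^2$, and inserting this into $g(\gamma)=0$ forces $\gamma=(r_1^2+r_2^2+r_3^2)/(3r_2)$ and then $(r_1^2+r_2^2+r_3^2)^3=27\,r_1^2r_2^2r_3^2$, which by AM--GM holds only when $r_1=r_2=r_3$. So under the hypothesis ``not all radii equal'' every critical point is non-degenerate, $\mathcal A$ is Morse, and by (a) it is perfect with one maximum, two saddles and one minimum. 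I expect the main obstacle to be the bookkeeping in step (a): pinning down the roots of $g$ inside the admissible window $|\gamma|\le\min(r_1,r_3)$ and verifying the two-to-one lifting, including the degenerate-radius cases; the Hessian computation and the AM--GM/Euler-characteristic wrap-up are then short.
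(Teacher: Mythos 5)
Your argument is correct, and I checked the key computations: the reduced function $\mathcal A(u,v)=r_1r_2\sin u+r_2r_3\sin v-r_1r_3\sin(u+v)$, the criticality relations $r_1\cos u=r_3\cos v=\gamma$ with $\cos(u+v)=r_2\gamma/(r_1r_3)$ and $\sin u\sin v=\gamma(\gamma-r_2)/(r_1r_3)$, the cubic $g(\gamma)=2r_2\gamma^3-(r_1^2+r_2^2+r_3^2)\gamma^2+r_1^2r_3^2$, the sign pattern $g(-r_3)<0<g(0)$, $g(r_3)\le 0$ giving exactly two admissible roots with two lifts each (including the borderline $r_2=\min(r_1,r_3)$, where $\sin v=0$ but the two choices of $\sin u$ still give two critical points), the Hessian identity $\det H=\tfrac{r_2}{\gamma}(r_2\gamma^3-r_1^2r_3^2)$ at a critical point, and the AM--GM conclusion that degeneracy forces $r_1=r_2=r_3$; the Euler-characteristic bookkeeping on $T^2$ then gives $(c_0,c_1,c_2)=(1,2,1)$. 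However, this is a genuinely different route from the paper. The paper does not compute at all: it observes the identity $\mathcal A(a_1,a_2,a_3)=-\mathcal A_{arm}(a_1,-a_2,a_3)$ relating the triangle on concentric circles to a planar open 3-arm (multiple pendulum), so the two critical-point theories coincide, and then quotes Theorem 2.1 of \cite{ks} together with the detailed computation in \cite{ks1}. That reduction is shorter and embeds the result in the linkage/arm literature (where the critical configurations have a geometric description via the orthocenter/diacyclic dichotomy), but it is not self-contained. Your proof is longer but elementary and internal to this paper: it uses only Theorem \ref{t:crit_va} and Proposition \ref{hessn}, makes the non-degeneracy quantitative (the AM--GM equality case isolates exactly the excluded configuration $r_1=r_2=r_3$, consistent with the $A_\infty$-type degeneration seen for three equal tangent circles elsewhere in the paper), and explicitly covers the case of two coinciding radii, which the hypothesis ``not all radii equal'' permits.
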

\begin{proof}
The paper \cite{ks}  studied open n-arms, including a criterion for the critical points of the area function. In the case of 3-arm there is the following relation between the area of arms and the area of a triangle with 3 points on concentric circles:
       $$\mathcal A (a_1,a_2,a_3) =  - \mathcal A_{arm} (a_1,-a_2,a_3) $$
			As a corollary: The two critical point theories (for 3-arms and for 3 concentric circles) are equivalent. 
			Proposition \ref{p:3cc} follows now Theorem 2.1 from \cite{ks}, more especially from the detailed computation in \cite{ks1}.
\end{proof}
\noindent
NB. The criterion for critical 3-arm is (cf Theorem 1.1 of \cite{ks}) 
the diacyclic situation (all vertices of the arm are on a circle and the center of the circle is the midpoint of the endpoint vector of the arm; while in the other case the origin of the vectors is equal to the orthocenter of the triangle spanned by the endpoints of the 3 vectors.  See Figure \ref{fig:ArmVersusConc}.
\begin{figure}[h]
	\centering
		\includegraphics[width=0.60\textwidth]{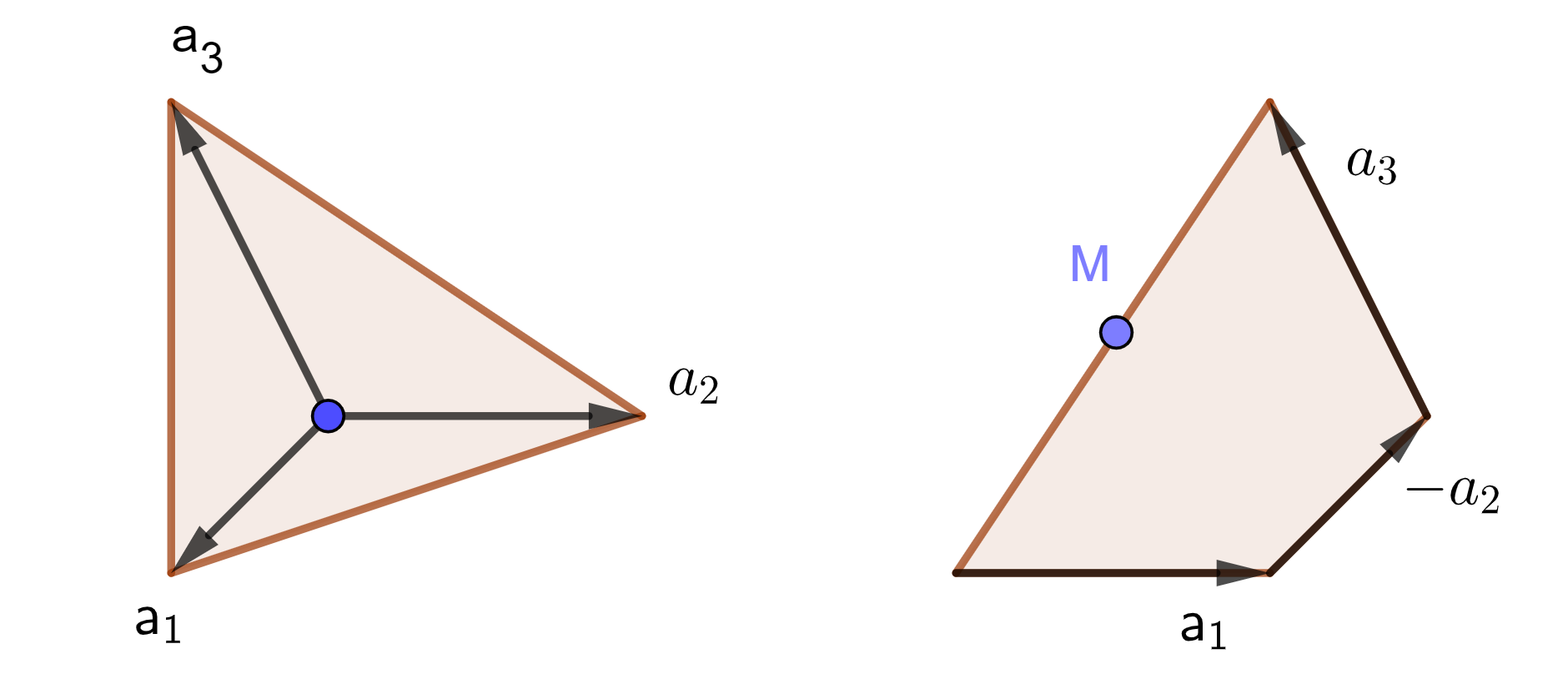}
	\caption{Concentric circles versus Arms.}
	\label{fig:ArmVersusConc}
\end{figure}

\subsubsection{Four concentric circles}

The general criterion for critical point specializes to:\\
A 4-gon with vertices on 4 concentric circles has a critical point if and only if  the diagonals are orthogonal and intersect in the center O.

\begin{proposition}
In case of  four concentric circles with $r_1 \ne r_3 $ and $r_2 \ne r_4$:
\begin{itemize}
\item $\mathcal A$ has precisely 8 critical points on the 3-torus
\item  all critical points are Morse.
\end{itemize}
As a consequence:  $\mathcal A$ is a perfect  Morse function.
\end{proposition}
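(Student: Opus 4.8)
\smallskip
\noindent
\textbf{Proof plan.} The plan is to pass to the reduced configuration space $(S^1)^3=(S^1)^4/SO(2)$, on which $\mathcal A$ descends to a function $\bar{\mathcal A}$ because the signed area is invariant under a common rotation about the center $O$. A critical orbit of $\mathcal A$ on $(S^1)^4$ then corresponds to a critical point of $\bar{\mathcal A}$ on the $3$-torus, and the Hessian of $\bar{\mathcal A}$ at such a point is the Hessian of $\mathcal A$ restricted to any slice transverse to the orbit.

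\smallskip
\noindent
First I would count the critical points, using the criterion already established for concentric circles: a critical $4$-gon is exactly one with $p_1\parallel p_3$, $p_2\parallel p_4$ and $p_1\perp p_2$ (diagonals orthogonal and through $O$). The hypotheses $r_1\ne r_3$ and $r_2\ne r_4$ guarantee $p_1-p_3\ne 0$ and $p_2-p_4\ne 0$, which is precisely what is needed to extract these three relations from the chain $p_1\cdot p_2=p_2\cdot p_3=p_3\cdot p_4=p_4\cdot p_1$ and to exclude the coincidence alternatives of Theorem \ref{t:crit_va}. The direction of $p_1$ is a free parameter (the $SO(2)$-orbit), while $p_3$ may point along $\pm p_1$, and $p_2,p_4$ each along $\pm p_1^{\perp}$; this gives $2^3=8$ discrete choices, hence exactly $8$ critical orbits, i.e. $8$ critical points of $\bar{\mathcal A}$ on $(S^1)^3$.

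\smallskip
\noindent
Next I would prove non-degeneracy. Parametrizing each circle by its angle, $C_i=O+r_i(\cos\alpha_i,\sin\alpha_i)$, and substituting into Proposition \ref{hessn} gives the corner-tridiagonal matrix $H$ with $a_i=-p_i\times(p_{i+1}-p_{i-1})$ and $b_i=r_ir_{i+1}\sin(\alpha_{i+1}-\alpha_i)$. Rotation invariance gives $\sum_i\partial_i\mathcal A\equiv 0$, hence $\sum_iH_{ij}\equiv 0$, so the vector $(1,1,1,1)$ lies in $\ker H$ and $\operatorname{rank}H\le 3$ everywhere; therefore $\bar{\mathcal A}$ is Morse at a given critical point exactly when $\operatorname{rank}H=3$ there, equivalently when the $(1,1)$-minor of $H$ is nonzero (this minor is the matrix of $\mathrm{Hess}\,\bar{\mathcal A}$ in the slice $\{\alpha_1=\mathrm{const}\}$). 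It then remains to evaluate this $3\times 3$ determinant at each of the $8$ critical points; placing $p_1$ along $(1,0)$ so that every $\alpha_i\in\{0,\tfrac{\pi}{2},\pi,\tfrac{3\pi}{2}\}$, a short computation yields in each case a value equal to a nonzero multiple of $r_1r_2r_3r_4$ times $(r_1\pm r_3)(r_2\pm r_4)$, with the inner signs determined by the relative orientations of $p_1,p_3$ and of $p_2,p_4$. Since all radii are positive and $r_1\ne r_3$, $r_2\ne r_4$, each of these is nonzero, so all $8$ critical points are non-degenerate.

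\smallskip
\noindent
Finally, the $3$-torus has total Betti number $2^3=8$, so by the Morse inequalities every Morse function on $(S^1)^3$ has at least $8$ critical points, with equality iff it is perfect; as $\bar{\mathcal A}$ attains exactly $8$, it is a perfect Morse function (necessarily with one minimum, three critical points of index $1$, three of index $2$, and one maximum). I expect the main obstacle to be the bookkeeping in the non-degeneracy step: carrying the eight sign patterns through the $3\times 3$ determinant and verifying that the result always factors as a nonzero multiple of $(r_1\pm r_3)(r_2\pm r_4)$. The conceptual point, worth stating, is that the two hypotheses $r_1\ne r_3$ and $r_2\ne r_4$ are exactly what keeps the critical set finite and the critical points non-degenerate --- if, say, $r_1=r_3$ one instead obtains a positive-dimensional family of critical $4$-gons --- in analogy with the role of ``not all radii equal'' in the three-circle case.
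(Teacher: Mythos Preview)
Your proposal is correct and follows essentially the same route as the paper: reduce by the rotational symmetry (the paper simply fixes $\alpha_1=0$), use the orthogonal-diagonals criterion to list the $8$ critical points, and compute the $3\times 3$ Hessian determinant on the slice to verify non-degeneracy. The paper's only additional device is to allow \emph{signed} radii $r_2,r_3,r_4$ so that all eight configurations are handled by the single formula $\det H=-r_1r_2r_3r_4(r_4-r_2)(r_3-r_1)$, which is exactly your ``nonzero multiple of $r_1r_2r_3r_4(r_1\pm r_3)(r_2\pm r_4)$'' with the signs absorbed into the radii.
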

\begin{proof}
Consider the following construction:\\
 Start with at any point $P_1$ on $C_1$. Take a line $l$ from that point to the center. This line has 2 intersection points $P_3^{\pm}$ with $C_3$. Take next a line through the center orthogonal to $l$ and intersect with $C_2$ and $C_4$. Altogether we have 8 possibilities $P_1 P_2^{\pm}P_3^{\pm}P^{\pm}_4$
Compare the right hand side of  Figure \ref{fig:orthocenter}.\\

\noindent
Next we compute the Hesse matrix in the critical points by using the formula in  (\ref{ss:circles}). We fix $\alpha_1=0$. 
Critical points now occur when $\alpha_2= \pm \frac{\pi}{2}, \alpha _3 = 0$ or $\pi , \alpha_4 = \pm \frac{\pi}{2}$. The Hesse matrix is as follows:
$$
\begin{pmatrix}
 r_4 r_1 - r_1 r_2  & - r_1 r_2 & 0 \\
 - r_1 r_2 & - r_1 r_2 + r_2 r_3  &  r_2 r_3\\
 0 & r_2 r_3 & - r_4 r_3+r_2 r_3 \\
\end{pmatrix}
$$
We allow also negative values of the radii, in that way we can deal with all the 8 stationary  polygons together. We require $r_1 > 0$.
The Hessian determinant is:
$$ - r_1 r_2 r_3 r_4 (r_4- r_2)(r_3 -r_1) $$
It follows that as soon as the determinant is non-zero we have 8 critical points which are all of Morse type.

\end{proof}
%%%%%%%%%%%%%%%%%%%%%%% s: birth and death
\section{Birth and death}\label{s:birth}
%%%%%%%%%%%%%%%%%%%%%%%%%%%%%%%       POINTLIKE  %%%%%%%%%%%
\subsection{About point-like curves} \label{ss:pointlike}
%%%%%%%%%%%%%%%%%%%%%%%%%%%%%%%%%%%%%%%%%%%%%%%%%
It is also possible to apply theorem \ref{t:crit_va} in cases that some  of the curves are points. We are just left with the partial derivative conditions for the remaining curves.

\medskip
\noindent
Let $C_i = P_i$ (constant).
The effect on the Hesse matrix is  that the entries $b_{i-1},a_i,b_i $ become all $0$. 
If we omit the constant variable $t_i$ the reduced matrix becomes 'tridiagonal without corners' (after the shift in numbering $i \to n$).
The seize of the Hesse matrix is reduced by the number of the point-like curves. 
$$
 \begin{pmatrix}
               a_1  &  b_1   &   0  &   0 & 0 &  b_6\\
              b_1  &  a_2   &  0 & 0  &   0 & 0\\
						0 & 0 &  0 & 0  & 0 & 0\\
0 & 0 &  0 &  a_4 & b_4 & 0\\
               0  & 0  &  0 &  b_4  & a_5 & b_5\\
 b_6  & 0  &  0 &  0 & b_5 & a_6
             \end{pmatrix}
\; \; ;  \; \;      \begin{pmatrix}
               a_1  &  b_1   &   0  &   0 & b_6 \\
              b_1  &  a_2    &   0 & 0 & 0 \\			
0 & 0 &  a_4 & b_4 & 0 \\
               0  & 0  &  b_4  & a_5 & b_5  \\
b_6 & 0 & 0 & b_5 & a_6
             \end{pmatrix}
$$
In case of two or more (pairwise) non-neighbouring points the polygon splits  into sub chains with fixed endpoints. The conditions separate the variables over the chains. The Hesse matrix becomes a block matrix with tri-diagonal blocks.

\medskip
\noindent
We mention the following sign change rule for the index:\\

\noindent
{\bf Sylvester Rule:}
{\it Let $H$ be a symmetric matrix of size $n$.  Let $H_k$ denotes the $k \times k$ submatrix consisting of the first $k$ rows and 
columns.
We consider the sequence: 
\begin{equation}\label{eq:sylv}1, \det  H_1, \det H_2 , \cdots , \det H_{n-1} , \det H_n .
\end{equation}
Under the assumption that $H_k$ is non-singular for all $k$  the index of the 
symmetric matrix $H$ is equal to the number of sign changes in the sequence.
}

\vspace{0.4cm}
\noindent
We copied this statement form the paper \cite{SV}. It is in fact a consequence of the Jacobi-Sylvester signature rule. We refer to   \cite{GR}, which contains a historical description.
\smallskip

\begin{remark}
{\em What to do if some $\det H_k = 0$ for some $k < n $ ?} We will use that for a non-degenerate matrix the index does not change under small perturbations.
Let $H[\epsilon] = H + \epsilon I$.  For a proper choice of $\epsilon$ this will not change the index of $H$ and also not of those $H_k$ where $\det H_k \ne 0$.
We can now compute the index of $H$ by counting the sign changes of $\det H[\epsilon]_k$. This argument (supplied by Van der Kallen) will be useful at several places in this paper.
\end{remark}
%%%%%%%%%%%%%%%%
\subsection{The birth of tangential circles}
%%%%%%%%%%%%%%%%
If we have some constant curves, let small circles grow at those points with well-chosen tangent directions and consider the effect.
Compare the left part of Fig \ref{fig:birth}.

\begin{proposition}\label{p:birth1}
Let a subset of the curves  $C_1, \cdots , C_{n}$  be constant (point curves).  Consider  a critical polygon $\mathcal P$.  Replace some (or all)  point-curves $P_i$ by  circles, such that the tangent line  in $P_i$ is parallel to $P_{i-1}P_{i+1}$. Then $\mathcal P$ is also a critical polygon for the 
updated set of curves.\\
If $\mathcal P$   is Morse for the original curves, then for small enough   radii, $\mathcal A$ is also Morse for the updated curves.  The Morse index increases with $0$ or  $1$ for each new circle, depending on position of the circle with respect to the tangent line.
\end{proposition}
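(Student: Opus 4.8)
The plan is to start from the reduced Hessian $H^{(0)}$ of $\mathcal A$ at $\mathcal P$ for the original (partly point) curves, which by the discussion in Section \ref{ss:pointlike} is the symmetric block-tridiagonal matrix obtained from Proposition \ref{hessn} by deleting the rows and columns indexed by the point-curves. The key observation is that replacing a point-curve $P_i$ by a circle tangent to the line $P_{i-1}P_{i+1}$ at $P_i$ reinstates exactly one row/column of the full matrix, and in the arc-length parametrization of that small circle the new diagonal entry is $a_i = C_i'' \times (C_{i+1}-C_{i-1}) = \kappa_i N_i \times P_{i-1}P_{i+1} = -\kappa_i\epsilon_i l_i$, while the two new off-diagonal entries $b_{i-1} = T_{i-1}\times T_i$ and $b_i = T_i\times T_{i+1}$ are \emph{unchanged} from what they would be with the curve present — they depend only on the tangent directions, which are fixed by the parallelism condition, not on $\kappa_i$. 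So the parallelism hypothesis guarantees precisely that $\mathcal P$ stays critical (Theorem \ref{t:crit_va}), and as $r_i \to \infty$ (curvature $\kappa_i \to 0$) the new diagonal entry tends to $0$ — i.e. the enlarged Hessian $H^{(1)}$ degenerates, in the limit of zero curvature, to $H^{(0)}$ bordered by a zero row/column for the index $i$, which is the honest point-curve situation.

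Next I would do the circles one at a time and argue by induction on the number of reinstated circles; it suffices to treat adding a single circle at index $i$ to a configuration in which $C_{i-1}$ and $C_{i+1}$ are already present (possibly as circles or as genuine curves), so that $a_{i-1}, a_{i+1}$ and all the other entries are already those of a Morse Hessian $\tilde H$ of some size $m$, and we pass to size $m+1$ by inserting the new variable $t_i$. Order the variables so that $t_i$ comes last; then the bordered matrix is
$$ H^{(1)} = \begin{pmatrix} \tilde H & v \\ v^{T} & a_i \end{pmatrix}, \qquad v = b_{i-1} e_{i-1} + b_i e_{i+1}, $$
where $v$ does not depend on $r_i$ and $a_i = -\kappa_i\epsilon_i l_i \to 0$ as $r_i\to\infty$. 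Since $\tilde H$ is nonsingular, for $|a_i|$ small the Schur complement $a_i - v^{T}\tilde H^{-1} v$ is close to $-v^{T}\tilde H^{-1}v$, which is a nonzero constant generically (and one can perturb by the Van der Kallen $\epsilon$-trick if it happens to vanish); hence $H^{(1)}$ is nonsingular for all small enough $r_i^{-1}$, so $\mathcal A$ is Morse. By the standard Schur-complement formula $\det H^{(1)} = \det\tilde H \cdot (a_i - v^{T}\tilde H^{-1}v)$ and the corresponding signature identity $\operatorname{index} H^{(1)} = \operatorname{index}\tilde H + \operatorname{index}(a_i - v^{T}\tilde H^{-1}v)$, the index goes up by $0$ or $1$ according to the sign of the scalar $a_i - v^{T}\tilde H^{-1}v$; for small $|a_i|$ this sign is that of $-v^{T}\tilde H^{-1}v$, which is independent of $r_i$.

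Finally I would identify that sign geometrically so that the phrase "depending on position of the circle with respect to the tangent line" is justified: $-v^{T}\tilde H^{-1}v$ does not involve $\kappa_i$ at all, so the only role of the circle is through the \emph{sign} of $a_i = -\kappa_i\epsilon_i l_i$, equivalently the sign rule stated after Proposition \ref{hessn} — namely which side of the oriented tangent line $P_{i-1}P_{i+1}$ the center of curvature $M_i$ lies on. One side gives $a_i$ and $-v^{T}\tilde H^{-1}v$ the same sign, producing no new negative eigenvalue (index $+0$); the other side gives a new negative eigenvalue (index $+1$). I would remark that the comparison with the point-curve case is the $a_i=0$ limit, where $\operatorname{index}(a_i - v^T\tilde H^{-1}v)=\operatorname{index}(-v^T\tilde H^{-1}v)$ already contributes, consistently with the Sylvester-rule bookkeeping of Section \ref{ss:pointlike}. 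The main obstacle is the potential vanishing of the Schur complement $v^{T}\tilde H^{-1}v$ — that is, the borderline case distinguishing index $+0$ from index $+1$ — which forces the genericity/perturbation caveat and is exactly where a sharper geometric criterion would be desirable; handling the iteration cleanly also requires checking that after reinstating one circle the remaining $\tilde H$ stays nonsingular, which follows from choosing the radii successively small (or again the $\epsilon$-trick).
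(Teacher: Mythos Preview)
Your Schur-complement/bordering framework is a perfectly good alternative to the paper's Sylvester-rule computation, but you have the limit reversed, and this breaks the argument.

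In arc-length parametrization the new diagonal entry is $a_i = -\kappa_i\epsilon_i l_i = \pm l_i/r_i$, so \emph{small} radius means $|a_i|\to\infty$, not $|a_i|\to 0$. The regime $r_i\to\infty$ you analyze is the tangent-\emph{line} limit, not the point-curve limit. Your claim that in this limit $H^{(1)}$ becomes ``$H^{(0)}$ bordered by a zero row/column, which is the honest point-curve situation'' is false: the off-diagonal block $v$ depends only on tangent directions and does not vanish; what vanishes in the point-curve case of \S\ref{ss:pointlike} is the entire row/column, because there $C_i'=0$ and hence $b_{i-1}=b_i=0$. In your parametrization that limit is never reached.

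This matters for the conclusion. For small $|a_i|$ the Schur complement is governed by the fixed scalar $-v^T\tilde H^{-1}v$, which does not depend on the side of the circle, so your attempted identification (``one side gives the same sign, the other side a new negative eigenvalue'') is not supported by your analysis. The correct small-radius argument in your framework is the opposite: for small $r_i$ one has $|a_i|$ large, so the Schur complement $a_i - v^T\tilde H^{-1}v$ has the sign of $a_i$, which flips with the side of the tangent line; this gives Morse without any genericity caveat on $v^T\tilde H^{-1}v$ and gives index $+0$ or $+1$ according to the sign of $a_i$, exactly as claimed. The paper reaches the same endpoint via a different parametrization, $C_n[r]=O[r]+r(\cos t_n,\sin t_n)$, in which all three entries $a_n[r],b_{n-1}[r],b_n[r]$ scale linearly with $r$; expanding the bordered determinant yields $\det H[r]=r\,a_n\det H_{n-1}+r^2K$, and Sylvester's rule on the pair $\det H_{n-1},\det H[r]$ gives the index jump from the sign of $r a_n$.
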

\begin{proof}
Consider the small diagonals of the critical polygon $\mathcal P$ . Their directions determine also the tangent directions for a critical point is the updated problem. As soon if we replace a constant curve $P_i$  by a curve through $P_i$ with tangent direction parallel to $P_{i-1}P_{i+1}$  we satisfy the parallel conditions for the updated curves.
For the Morse theory: Consider the Hesse matrix in Proposition \ref{hessn}. The original problem corresponds to a submatrix where the $i^{th}$ rows and columns have been deleted for every born circle. (Note that we use here the remark that the second derivative with respect to $t_{i-1}t_{i+1}$ is $0$). It's determinant is by assumption non-zero. 

\smallskip
\noindent
We intend to use Sylvester's rule for the statement about indices. We assume $i=n$.
Let $C_n[r] = O[r] + r(\cos t_n, \sin t_n)$ be the circle with radius $r \ne 0$, which is tangent at  $P_n$ to the line through $P_n$, which is paralel to $P_{n-1}P_1$.
We allow $r < 0$ in order to describe circles at both sides of this line. Note that:\\
$C_n[r]^{'}= r (\sin t_n , \cos t_n)$  and  $C_n[r]^{''}= -r (\cos t_n, \sin t_n) $.  Consider next the Hesse matrix $H[r]$, with the entries:
$a_n[r] = r a_n \;  ,\;  b_n[r] = r b_n \;  , \: b_{n-1}[r] = r b_{n-1}$  where $a_n, b_n, b_{n-1}$ are the values for $r=1$.  All the other $a_i, b_i$ do not depend on $r$.
An elementary determinant computation shows:
\begin{equation}\label{eq:detH}
 \det H[r] = r a_n \det H_{n-1} +  r^2  K \; \; ; \;  \; (\mbox{ K \rm{ a constant}} )
\end{equation}
Use now Sylvester's rule. We have the assumption $\det H_{n-1} \ne 0$. It follows that for $|r|\ne 0$ small enough  $\det H[r] \ne 0$ and the sign change between the determinants is determined by the sign of $r a_n$. This increases the Morse index with 0 or 1. \\
This reasoning can be repeated for the other point curves.
\end{proof}

\begin{figure}[h]
	\centering
		\includegraphics[width=0.40\textwidth]{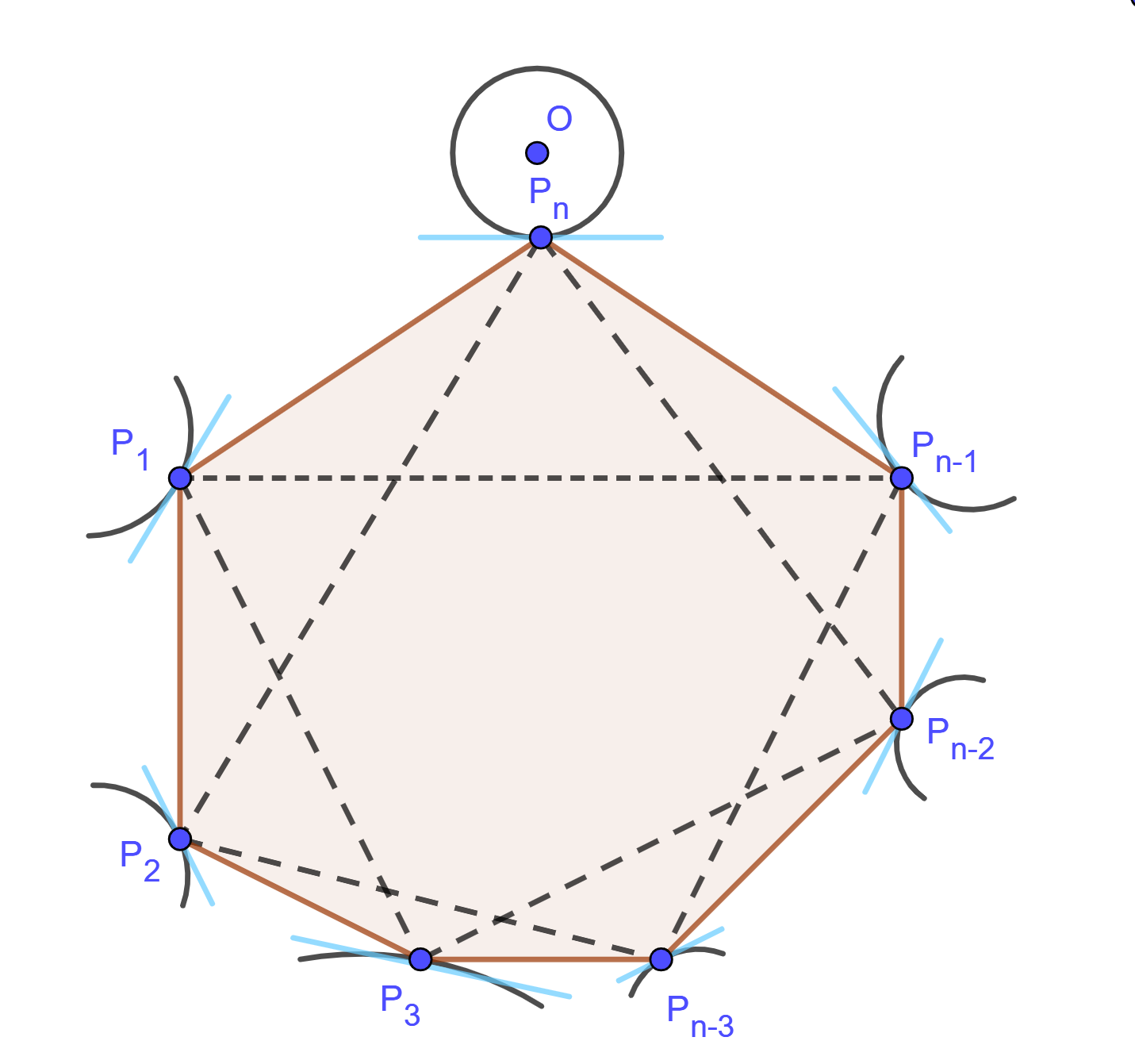}
\includegraphics[width=0.40\textwidth]{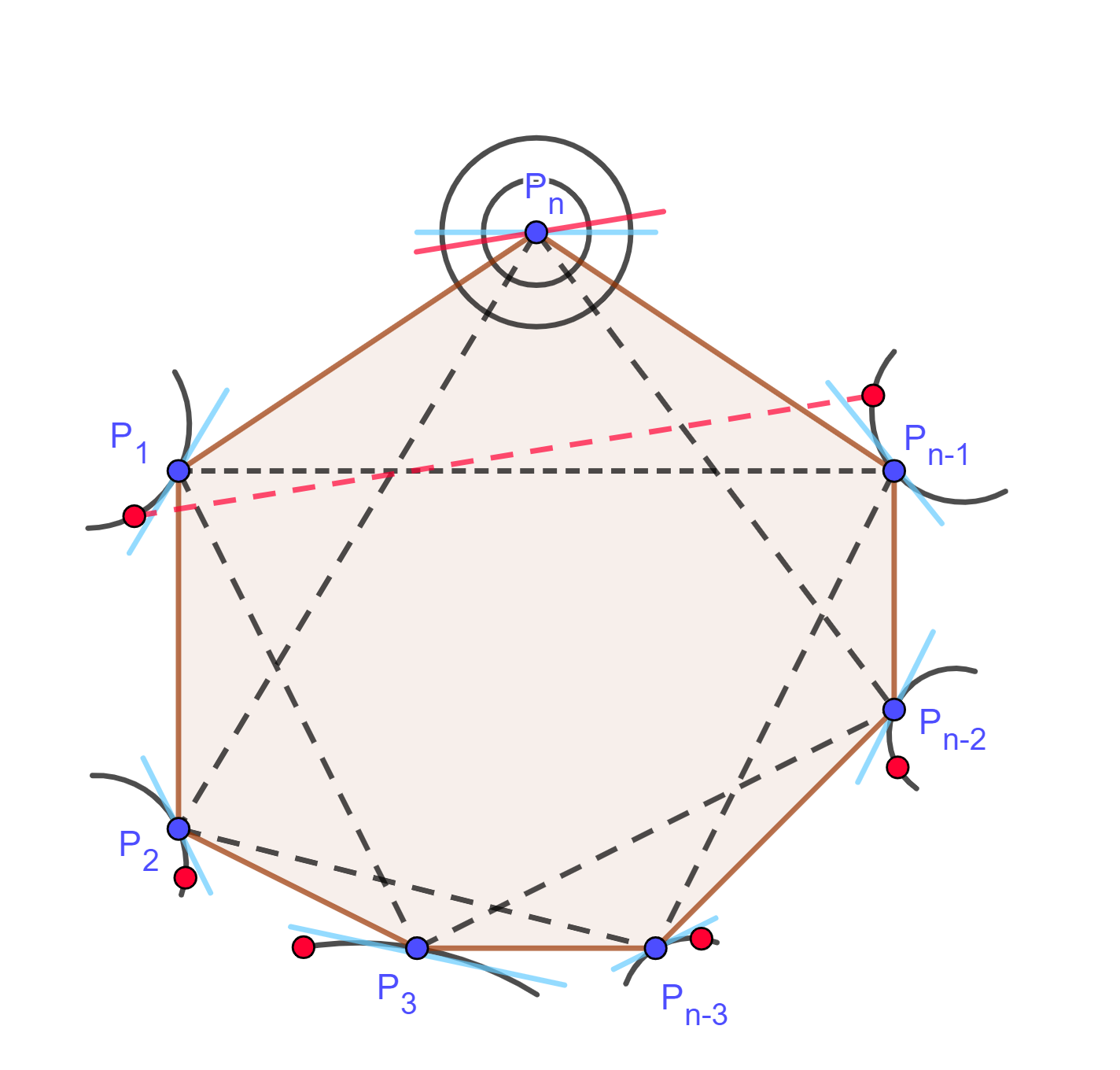}

	\caption{Growing circles}
	\label{fig:birth}
\end{figure}

\begin{remark}
{\em  How to determine the sign of $a_n$ in a geometric way ?}
Let $M_i$ be the center of curvature of $C_i $ at the point $P_i$  then $a_i > 0 $ if  $M_i$ is on the left side of the tangent line and $a_i < 0$ if $M_i$ is on the right side.
(the tangent line has orientation from $P_{i-1}P_{i+1}$).
\end{remark}

\begin{remark}
In case $a_n \det H_{n-1} \ne 0$ the formula (\ref{eq:detH}) shows that $\det H[r] = 0 $ has 2 different roots as soon as $K \ne 0$. It follows, that if $|r|$ grows we get another sign for $H[r]$, which has an effect on the Morse index.
\end{remark}

\noindent
We can extend the idea behind the proof to the growing of more points at the same moment.
\begin{example}
 We start with a polygon $\mathcal P = P_1\cdots P_n$ in `general position''.
The directions of the small diagonals determine potential tangent directions. Consider `reference' circles with centers $M_i$ and radius $r_i$;  each of them with a given sign of $a_i$ and tangent at $P_i$ to the tangent lines.
Next consider the circles with center $M_i[s] $ and radius $s r_i$, still tangent to the same tangent line which coincide for $r=1$ with our reference circle's.

\noindent
Our polygon is critical for every $s$. The matrix elements are  $a_i[s] = s a_i$  and $b_i[s] = s^2 b_i$, where the $a_i$ and $b_i$ are defined for the reference circles.

\noindent
It follows ($k=1,\cdots n$):
$$  \det H _k[s]=   s^k( a_1 a_2 \cdots a_k  + s^2 K_k [s]) $$
For small enough $s>0$  the sign is  given by  the sign of  $a_1 a_2 \cdots a_k$. With the help of the Sylvester rule one can compute the index of the critical polygon. By changing the signs  of $a_i$ (taking the reference circle at the other side of the tangent line) one changes the index. By repeating this procedure one can get any index.
\end{example}

%%%%%%%%%%%%%%%%%%%%
\subsection{The birth of centered circles}
\medskip
The circles in Proposition \ref{p:birth1} don't have the center in $P_i$. The following statement tells about that situation (see the right hand side of Fig \ref{fig:birth}):
Let $C_1, \cdots , C_{n-1}$ be curves and $C_n$ is a point curve $P_n$  (all disjoint). Let the point $P_n$ grow to a small circle $C(P_n,r)$.
We look for the critical points of $\mathcal A$: It turns out that generically each critical point on $W = C_1 \times \cdots \times C_{n-1}$ generates two critical points on $W \times S^1$:

\begin{proposition}\label{p:birth2}
Given points $P_1,\cdots ,P_{n-1}$ on smooth curves  $C_1, \cdots ,C_{n-1}$ and a point $P_n$ such that   the polygon $P_1P_2 \cdots P_{n-1} P_n$ is a critical point of $\mathcal A$  on $W$.\\
Assume transversality: $ P_{i-1}P_{i+1} \pitchfork P_{i}P_{i+2} $ for $i= 1,\cdots,n$.
%A
Then,   for $r$ small enough, there exists  near to $P_1P_2 \cdots P_{n-1}P_n$  on $W \times C(P_n,r)$ exactly two critical polygons    $P_1^{\pm} \cdots P_{n-1}^{\pm}P_n^{\pm}$, where $P_n^{\pm}$  on $C(P_n,r)$.\\
If the original critical point is  Morse of index $\mu$ then the two new critical points are again Morse and have index $\mu$, resp. $\mu + 1$.
\end{proposition}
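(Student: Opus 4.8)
The plan is to set up explicit coordinates on $W \times S^1$ near the given critical polygon and analyze the critical point equations perturbatively in $r$. First I would parametrize the new curve as $C_n(t_n) = P_n + r(\cos t_n, \sin t_n)$, keep the parametrizations $C_i(t_i)$ of the fixed curves, and write down the $n$ critical equations from Theorem \ref{t:crit_va}: the equations $C_i^{'} \times (C_{i+1} - C_{i-1}) = 0$ for $i = 1, \dots, n$. For $i \neq n-1, n, 1$ these are unchanged; the three equations indexed $n-1$, $n$, $1$ pick up the $r$-dependence. When $r = 0$ the system reduces to the original critical equations on $W$ (for indices $1, \dots, n-1$) together with the equation for $i = n$, which reads $C_n^{'}(t_n) \times (C_1 - C_{n-1}) = 0$, i.e. $(\sin t_n, \cos t_n) \times (P_1 - P_{n-1}) = 0$: this says the tangent direction $t_n$ is parallel to $P_{n-1}P_1$, which has exactly two solutions $t_n^{\pm}$ differing by $\pi$ (the two antipodal points of the limiting circle). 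So at $r = 0$ the solution set near the original polygon is precisely two points in $W \times S^1$.

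Next I would apply the implicit function theorem to continue each of these two $r=0$ solutions to small $r$. For this I need the Jacobian of the $n$-equation system in the $n$ variables $(t_1, \dots, t_n)$ to be nonsingular at each of the two $r=0$ solutions. This Jacobian is block-structured: the $(t_1, \dots, t_{n-1})$-block restricted to the first $n-1$ equations is essentially the Hessian of $\mathcal A$ on $W$ at the original critical polygon, which is nonsingular because that point is assumed Morse; the derivative of the $n$-th equation with respect to $t_n$ is $C_n^{''}(t_n) \times (C_1 - C_{n-1}) = -(\cos t_n, \sin t_n) \times (P_1 - P_{n-1})$, which is nonzero exactly because at $t_n^{\pm}$ the vector $(\sin t_n, \cos t_n)$ \emph{is} parallel to $P_1 - P_{n-1}$, hence $(\cos t_n, \sin t_n)$ is not — this uses $P_{n-1} \neq P_1$, guaranteed by disjointness. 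The off-block terms ($\partial/\partial t_n$ of equations $n-1$ and $1$, and $\partial/\partial t_j$ of equation $n$) all carry a factor of $r$ at $r=0$ and so do not spoil the block-triangular nonsingularity. Hence the IFT gives, for $|r|$ small, exactly two critical polygons $P_1^{\pm}\cdots P_{n-1}^{\pm}P_n^{\pm}$ depending smoothly on $r$, with $P_n^{\pm} \to P_n$ and $t_n^{\pm}$ the two limiting tangent directions.

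For the Morse indices, I would examine the Hesse matrix of $\mathcal A$ on $W \times S^1$ at each $P^{\pm}$. Using Proposition \ref{hessn} with the curve $C_n(t_n) = P_n + r(\cos t_n, \sin t_n)$, so that $C_n^{'} = r(-\sin t_n, \cos t_n)$ and $C_n^{''} = -r(\cos t_n, \sin t_n)$, the new row/column has entries $a_n[r] = C_n^{''} \times (C_1 - C_{n-1})$, $b_n[r] = C_n^{'} \times C_1^{'}$, $b_{n-1}[r] = C_{n-1}^{'} \times C_n^{'}$; all three scale linearly in $r$ to leading order, and $a_n[\pm]$ is, to leading order in $r$, $-r(\cos t_n^{\pm}, \sin t_n^{\pm}) \times (P_1 - P_{n-1})$, whose leading coefficient is $\pm$ some fixed nonzero number (the two signs because $t_n^+$ and $t_n^-$ differ by $\pi$). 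Exactly as in the determinant identity \eqref{eq:detH} of Proposition \ref{p:birth1}, an elementary cofactor expansion along the last row gives $\det H^{\pm}[r] = a_n[\pm] \det H_{n-1} + O(r^2) = \mp (\text{const}) \cdot r \cdot \det H_{n-1} + O(r^2)$, where $H_{n-1}$ is the (nonsingular) original Hessian on $W$. By Sylvester's rule (\eqref{eq:sylv}), applied with the Van der Kallen $\epsilon$-perturbation trick of the Remark if some intermediate minor vanishes, the index of $H^{\pm}[r]$ equals the index of $H_{n-1}$ plus $0$ or $1$ according to the sign of $a_n[\pm] \det H_{n-1}$; since $a_n^+$ and $a_n^-$ have opposite signs, one of the two new critical points has index $\mu$ and the other $\mu+1$. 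It remains only to note where the transversality hypothesis $P_{i-1}P_{i+1} \pitchfork P_i P_{i+2}$ enters: it is the condition ensuring that $P_1 - P_{n-1}$ and the relevant small diagonals are in general position so that the constant $K$-type terms and the off-diagonal $r$-contributions do not conspire to make the leading behaviour of $\det H^{\pm}[r]$ degenerate, i.e. it keeps the two continued solutions distinct and Morse.

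\textbf{Main obstacle.} The delicate point is bookkeeping the interaction of \emph{three} simultaneously $r$-dependent equations (indices $n-1$, $n$, $1$) and confirming that the limiting $r=0$ Jacobian really is nonsingular despite this coupling — i.e. verifying cleanly that all cross terms between the $t_n$-variable and the $(t_1,\dots,t_{n-1})$-variables vanish at $r=0$, so that the block-triangular argument goes through, and then tracking which minimal nondegeneracy (the stated transversality) is exactly what is needed for the index splitting rather than something stronger.
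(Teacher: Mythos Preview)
Your implicit--function--theorem approach is genuinely different from the paper's proof, which instead runs a \emph{billiard construction}: using the transversality hypothesis, the paper iterates the parallel condition $C_i(t_i)C_{i+2}(t_{i+2}) \parallel C'_{i+1}(t_{i+1})$ to express $t_3,\dots,t_{n-1}$ as functions of $(t_1,t_2)$, and then shows the map $(t_1,t_2)\mapsto Q(t_1,t_2)$ (the intersection of the two extreme parallel lines) is a local diffeomorphism near $P_n$. The parallel condition at $P_n$ then becomes $t-\beta_r(t)=\pm\pi/2$ for the argument $\beta_r$ of the chord $C_{n-1}C_1$, which for small $r$ is a small perturbation of $t-\beta_0$ and hence has exactly two transversal solutions on $S^1$. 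The index argument is then the same as yours.

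There are two gaps in your execution. First, the $n$-th equation $C_n'\times(C_1-C_{n-1})=0$ carries an overall factor of $r$ (since $C_n'=r(-\sin t_n,\cos t_n)$), so at $r=0$ it is identically zero and your Jacobian is singular. You must first divide that equation by $r$; you do this implicitly when you write $C_n''\times(C_1-C_{n-1})=-(\cos t_n,\sin t_n)\times(P_1-P_{n-1})$, but then your claim that the row-$n$ off-block entries $\partial/\partial t_j$ of equation $n$ carry a factor of $r$ is false after the rescaling --- for $j=1,n-1$ they are $(-\sin t_n,\cos t_n)\times C_j'$, which is $O(1)$. What saves you is that the \emph{column}-$n$ off-block entries do vanish at $r=0$, so the Jacobian is block lower-triangular with determinant $\det H_{n-1}\cdot d_n$, and IFT still applies.

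Second, and more seriously: that Jacobian is nonsingular only when $\det H_{n-1}\neq 0$, i.e.\ only under the Morse hypothesis. The Proposition, however, asserts the existence of exactly two critical polygons under the transversality hypothesis \emph{alone}; the Morse assumption enters only in the index statement. Your IFT route therefore proves a strictly weaker existence statement. This is precisely where the paper's billiard construction earns its keep: the transversality $P_{i-1}P_{i+1}\pitchfork P_iP_{i+2}$ is exactly what makes each billiard step a local diffeomorphism (since $T(P_{i+1})\parallel P_iP_{i+2}$ at the critical polygon), so the reduction to one scalar equation on $S^1$ works without any Morse assumption. Your last paragraph guesses that transversality controls the index splitting; in fact it controls existence, while the index splitting needs only Morse and $P_{n-1}\neq P_1$.
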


\begin{proof}
We start with a billiard type construction.  Our reasoning  applies to local neighbourhoods of the points $P_1,\cdots,P_n$. 
The transversality  conditions imply that each line $P_{i-1}P_{i+1}$ intersects $C_{i-1}$, resp $C_{i+1}$ transversal at $P_{i-1}$, resp $P_{i+1}$, $ (i =2, \cdots, n-2)$,\\
Choose coordinates $t_1,t_2$  on $C_1,C_2$ such that $P_1$ and $P_2$ correspond to $t_i=0$. Next we define $t_3,\cdots, t_{n-2}$ such that 
 $$C_i(t_i) C(t_{i+2}) \parallel  \mathbb{R} C_{i+1}^{'}(t_{i+1})\; \;, \; (i=1, \cdots n-2.)  $$
 Due to the transversality conditions, this well defined in a neighbourhood of $(0,0)$.  The maps $(t_i,t_{i+1}) \rightarrow (t_{i+1},t_{i+2})$ are local diffeomorphisms by the same reason. 

We intend to use $(t_1,t_2)$ as a coordinate system near $P_n$. Consider the map 
$$ (t_1,t_2) \rightarrow Q (t_1,t_2) = \big{(} C_2(t_2) + \mathbb{R} C_1^{'}(t_1) \big{)} \cap \big{(} C_{n-2}(t_{n-2})+ \mathbb{R} C_{n-1}^{'}(t_{n-1}) \big{)}.   $$ 
Also this map is a local diffeomorphism, due to the transversality of the images of the two coordinate-axis, which intersect in $Q=P_n$. Let  $\Phi(X)=  (t_1(X),t_2(X))$ be its inverse.\\ 
Next parametrize $C(P_n,r)$ by $X(t) = P_n+ r(\cos t,\sin t)$ , $t \in [0,2 \pi]$.  While $X$ is moving around the circle, we  consider the 
argument $\beta_r(t)$  of  the chord from $C_{n-1}(t_{n-1}(X))$ to $ C_1(t_1(X))$.  Note that for $r$ small enough the image of $\beta_r$
is contained in an arbitrary small circle sector around the (limit) direction $\beta_0$.\\
In order to satisfy the  parallel condition between $C_1$ and $C_{n-1}$ we have to determine those points $X$ on $C(P_n,r)$ where the tangent line to the circle is parallel to the chord. This is given by the condition $t - \beta_r(t) = \pm \pi / 2$.
For $r$ small enough the graph of $t -\beta_r(t)$  is transversal to levels $\pm \pi /2$, since this is the case if $r=0$ and moreover the 2 points of intersection survive during the small deformation.
Since our constructing takes care of all other parallel conditions we have shown that the two resulting polygons are critical.

\smallskip
\noindent
The statement about the Morse indices in $P_n^{\pm}$ follows in the same way as in Proposition \ref{p:birth1}.  The entries in the formula (\ref{eq:detH}) now depend all on $r$, but for $r$ small enough $a_n$ and $\det H_{n-1}$  are bounded away from $0$.

\end{proof}

%%%%%%%%%%%%%%%%%%%%%%%%%%%%%%%%%%%%%%%%
\section{Polygons on a single curve} \label{s:allonone}
%%%%%%%%%%%%%%%%%%%%%%%%%%%%%%%%%%%%%%%%

\subsection{Local, global and zigzags}
The case of a single curve is of special interest.  In this case we meet also non-isolated singularities of the area function, due to coinciding vertices.  If all vertices are distinct, then $\mathcal A$  behaves  as if the vertices were on different local curves and we can use all the facts about these from the proceeding sections. We call these the {\em global case}, while coinciding vertices are related to local effects. 
 This can give rise to a zig-zag behaviour of critical polygons.
\begin{definition}
 \em Adding a zig-zag to a n-gon $\mathcal P = P_1P_2\cdots P_n$  is a the (n+2)-gon 
$\overline{\mathcal P } = P_1\cdots P_{i-1}P_{i}P_{i-1}P_iP_{i+1}\cdots P_n$
\end{definition}
Note that the $i^{th}$ condition for critical polygons has two aspects:
\begin{itemize}
\item $P_{i+1}=P_{i-1}$  or
\item $T(P_i) \parallel P_{i-1}P_{i+1}$
\end{itemize}
Therefore we have:
\begin{proposition}
Let $\mathcal P $ be a critical n-gon on $C$, then any  (n+2)-gon $\overline{\mathcal P }$ with a zigzag added to $\mathcal P $  is also critical on $C$.
\end{proposition}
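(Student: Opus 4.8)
The plan is a direct, vertex-by-vertex check of the criticality criterion of Theorem~\ref{t:crit_va}. First I would label the vertices of $\overline{\mathcal P}$ as $Q_1,\dots,Q_{n+2}$, with $Q_j=P_j$ for $1\le j\le i$, the two inserted vertices $Q_{i+1}=P_{i-1}$ and $Q_{i+2}=P_i$, and $Q_j=P_{j-2}$ for $i+3\le j\le n+2$. Every $Q_j$ is one of the original vertices $P_k$, with the same parameter value, so it still lies on $C$ and the tangent direction $T(Q_j)$ is well defined at it. Hence it suffices to verify, for each $j$, the alternative of Theorem~\ref{t:crit_va}: either $Q_{j-1}=Q_{j+1}$, or $T(Q_j)\parallel Q_{j-1}Q_{j+1}$.

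Next I would split the $n+2$ conditions into two classes. For every index $j\notin\{i,i+1\}$ the ordered triple $(Q_{j-1},Q_j,Q_{j+1})$ coincides with the corresponding consecutive triple of $\mathcal P$: this is immediate for all vertices lying outside the inserted block, and a one-line check for the three ``interface'' vertices $Q_{i-1}=P_{i-1}$, $Q_{i+2}=P_i$, $Q_{i+3}=P_{i+1}$, whose neighbour pairs are $(P_{i-2},P_i)$, $(P_{i-1},P_{i+1})$, $(P_i,P_{i+2})$ respectively, exactly as in $\mathcal P$. For such $j$ the $j$-th condition for $\overline{\mathcal P}$ is literally a condition already satisfied by the critical polygon $\mathcal P$. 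The two remaining vertices fall under the coincidence alternative: at $Q_i=P_i$ one has $Q_{i-1}=Q_{i+1}=P_{i-1}$, and at $Q_{i+1}=P_{i-1}$ one has $Q_i=Q_{i+2}=P_i$, so $Q_{j-1}=Q_{j+1}$ in both cases. This covers all vertices of $\overline{\mathcal P}$, so $\overline{\mathcal P}$ is critical.

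There is no real obstacle here beyond the indexing bookkeeping; the one point worth flagging is that the criterion of Theorem~\ref{t:crit_va} is a disjunction, and the two duplicated vertices satisfy it through the branch $Q_{j-1}=Q_{j+1}$ rather than through a parallelism --- which is exactly why the two bullet alternatives recalled just before the statement are the relevant tool. I would also remark that the same argument applies independently to each zigzag, so finitely many zigzags may be inserted while preserving criticality; one can then feed the enlarged polygon into Proposition~\ref{hessn} to see how the Hessian, and hence the Morse index, changes, which is taken up later in this section.
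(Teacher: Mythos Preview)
Your proof is correct and follows the same route as the paper: the paper simply points to the disjunctive criticality condition of Theorem~\ref{t:crit_va} (parallelism \emph{or} coincidence of neighbours) and states the proposition as an immediate consequence, while you spell out the vertex-by-vertex verification explicitly. The labelling and the split into the two inserted vertices (handled via $Q_{j-1}=Q_{j+1}$) versus the remaining vertices (whose triples are unchanged from $\mathcal P$) is exactly the intended argument.
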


\noindent
N.B.  We meet a similar behaviour in case of two curves $C_{even}, C_{odd}$, where the vertices are on the corresponding curve: $P_i \in C_{even}$ when $i$ is even, and  $P_i \in C_{odd}$ when $i$ is odd.

\smallskip
\noindent
The next statement works in the generic  case:

\begin{proposition}
If the critical polygon $\mathcal P$ is Morse and $C_{n-1}^{'} \times C_n^{'} \ne 0$ 
then $\overline{\mathcal P }$ is also critical and Morse  and: \hspace{0.3cm}
Morse-index $(\overline{\mathcal P} )= 1 +$  Morse-index  $({\mathcal P})$
\end{proposition}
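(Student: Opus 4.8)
The plan is this. For criticality: in $\overline{\mathcal P}$ the two inserted vertices each lie between a pair of coinciding neighbours, so their conditions in Theorem \ref{t:crit_va} hold through the first alternative $P_{i+1}=P_{i-1}$; the parallel condition at the original vertex $P_n$ reappears at the new last vertex, and all the other conditions are exactly those of $\mathcal P$. Hence $\overline{\mathcal P}$ is critical, and only the Hessian needs work. View the zigzag as inserted between $P_n$ and $P_1$, so $\overline{\mathcal P}$ has $n+2$ vertices, those in positions $n+1$ and $n+2$ being copies of $P_{n-1}$ and $P_n$ sliding on copies of $C_{n-1}$ and $C_n$ through those points (with the already-chosen parametrizations). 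I would then read off the corner-tridiagonal Hesse matrix $\bar H$ from Proposition \ref{hessn}: at positions $n$ and $n+1$ the two neighbouring vertices coincide, so $\bar a_n=\bar a_{n+1}=0$; the new diagonal entry is $\bar a_{n+2}=C_n''\times(P_1-P_{n-1})=a_n$; the old corner entry $b_n=C_n'\times C_1'$ now couples positions $n+2$ and $1$; the two new off-diagonal entries are $\bar b_n=C_n'\times C_{n-1}'=-\sigma$ and $\bar b_{n+1}=C_{n-1}'\times C_n'=\sigma$, where $\sigma:=C_{n-1}'\times C_n'\ne 0$ by hypothesis; and every remaining entry is that of the original Hesse matrix $H$.

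Split the $n+2$ variables into the $n$ old ones and the two new ones $u$ (position $n+1$), $v$ (position $n+2$). This gives
$$\bar H \;=\; \begin{pmatrix} H' & B \\ B^{T} & D \end{pmatrix}, \qquad D \;=\; \begin{pmatrix} 0 & \sigma \\ \sigma & a_n \end{pmatrix},$$
where $B$ has exactly two nonzero entries ($-\sigma$ coupling $t_n$ with $u$, and $b_n$ coupling $t_1$ with $v$), and $H'$ agrees with $H$ except that its $(n,n)$ diagonal entry and its $(1,n)$ corner entry have been set to $0$. Since $\det D=-\sigma^2\ne 0$ --- the single point where the hypothesis $C_{n-1}'\times C_n'\ne 0$ is used --- completing the square in $u,v$ (the Schur complement) yields the congruence $\bar H \cong (H'-BD^{-1}B^{T})\oplus D$, so by Sylvester's law of inertia indices and signatures add.

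The decisive step is to identify $H'-BD^{-1}B^{T}$. The correction is supported only on the $(n,n)$ entry and the $(1,n)$ / $(n,1)$ corner of $H'$, and the key is the ratio $\bar b_n/\bar b_{n+1}=-1$, which is nothing but antisymmetry of the cross product: it restores the $(n,n)$ entry from $0$ back to $a_n$ and the corner from $0$ back to $b_n$, so that $H'-BD^{-1}B^{T}=H$ on the nose. Therefore $\bar H$ is nondegenerate (both $D$ and $H$ are), so $\overline{\mathcal P}$ is Morse; and since $\det D<0$ forces $D$ to have index $1$, the Morse index of $\overline{\mathcal P}$ equals $1$ plus that of $\mathcal P$.

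I expect the only real difficulty to be the sign bookkeeping --- arranging all the cross products so that the Schur complement returns exactly $H$, including that ratio $-1$ --- together with the sanity check that the arbitrary choice of parametrization and orientation of the two inserted vertex-curves does not matter: any other choice replaces $\bar H$ by a congruent matrix through a diagonal $\pm 1$-type transformation, and hence changes neither the index, the signature, nor the sign of the determinant, exactly as noted after Proposition \ref{hessn}.
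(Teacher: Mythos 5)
Your proposal is correct, and it reaches the conclusion by a genuinely different route than the paper. You first verify criticality exactly as the paper does (the two coincidence conditions at the inserted positions plus the old parallel condition reappearing at the last vertex), and your bookkeeping of the new Hessian entries is right: $\bar a_n=\bar a_{n+1}=0$, $\bar a_{n+2}=C_n''\times(P_1-P_{n-1})=a_n$, $\bar b_n=-\sigma$, $\bar b_{n+1}=\sigma$, $\bar b_{n+2}=b_n$ with $\sigma=C_{n-1}'\times C_n'$. The paper then argues with Sylvester's sign rule for leading principal minors: it notes $\overline H_k=H_k$ for $k\le n-1$, computes $\det\overline H_{n+1}=-b_{n-1}^2\det H_{n-1}$ and $\det\overline H_{n+2}=-b_{n-1}^2\det H_n$ by elementary determinant operations, and counts exactly one extra sign change in the sequence of minors, independently of the (possibly indeterminate) sign of $\det\overline H_n$. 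You instead split off the two new variables and take the Schur complement with respect to $D=\begin{pmatrix}0&\sigma\\ \sigma&a_n\end{pmatrix}$; the antisymmetry $\bar b_n/\bar b_{n+1}=-1$ makes the correction restore precisely the deleted entries $a_n$ and $b_n$, giving the congruence $\overline H\cong H\oplus D$, whence nondegeneracy, $\det\overline H=-\sigma^2\det H$, and the index shift by $1$ since $\det D=-\sigma^2<0$. Your version buys something real: it needs only $\det H\ne 0$ and $\sigma\ne 0$, whereas the minor-sign count formally requires all intermediate leading minors to be nonvanishing (the paper covers this with its perturbation remark following the Sylvester rule), and it delivers a sharper statement (an explicit congruence, hence also signature bookkeeping) in one stroke; the paper's route is shorter given that the Sylvester rule is already its standing tool, reused e.g.\ in the birth-of-circles propositions. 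One cosmetic remark: your placement of the diagonal entries ($0,0,a_n$ at positions $n,n+1,n+2$) is the natural one for the vertex order $P_1\cdots P_{n-1}P_nP_{n-1}P_n$ and differs from the matrix displayed in the paper (which places $a_n$ at position $n$); both lead to the same minors and the same conclusion, so this does not affect anything.
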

\begin{proof}

We can assume that $i=n$, so $\overline{\mathcal P } = P_1P_2\cdots P_{n-1}P_nP_{n-1}P_n$ 
We compare the Hessian matrices: For   $n=5$ these are as follows:

$$
H =        \begin{pmatrix}
               a_1  &  b_1   &   0  &   0 &  b_5 \\
              b_1  &  a_2   &  b_2 & 0  &   0\\
						0 & b_2 & \ a_3& b_3  & 0\\
0 & 0 &  b_3 &  a_4 & b_4 \\
               b_5  & 0  &  0 &  b_4  & a_5
             \end{pmatrix}
\; \; ;  \; \;
\overline{H} =        \begin{pmatrix}
               a_1  &  b_1   &   0  &   0 & 0 & 0 & b_5 \\
              b_1  &  a_2   &  b_2 & 0  &   0 & 0 & 0 \\
						0 & b_2 & \ a_3 & b_3  & 0 & 0 & 0\\
0 & 0 &  b_3 &  a_4 & b_4 & 0 & 0\\
               0  & 0  &  0 &  b_4  & a_5 & -b_4 & 0 \\
0 & 0 & 0 & 0 & -b_4 & 0 & b_4\\
b_5 & 0 & 0 & 0 & 0 & b_4 & 0 
             \end{pmatrix}
$$
\noindent
Notice that we get two extra rows and columns.  The entries in the new row and columns are 0 on the main diagonal . Due to the zigzag  we have  that three  tangent vectors are the same or have opposite direction. As a consequence: \\ $\overline{b}_{n+2} = b_n \;  , \; \overline{b}_{n+1} = b_{n-1}\; ,\; \overline{b}_n = - b _{n-1}$, $a_{n+1} = a_{n+2} = 0.$

\medskip
\noindent
We apply Sylvester's rule for our Hessian $H$ and compare the sequence ( \ref{eq:sylv})
$$ 1, \det  H_1, \det H_2 , \cdots , \det H_{n-1} , \det H_n $$
with the corresponding sequence for $\overline{H}$:
$$1, \det\overline{  H}_1, \det\overline{ H}_2 , \cdots , \det \overline{H}_{n-1},  \det \overline{ H}_n ,  \det \overline{H}_{n+1} , \det \overline{H}_{n+2}  $$  
Due to our genericity assumption both sequences satisfy the  Sylvester assumptions.
Note that : $H_k = \overline{H}_{k}$ as soon as $k \le n-1$.
Moreover by elementary determinant operations:\\
$\det \overline{H}_{n+2} =  - b_{n-1}^2 \det H_n $,\\
$\det \overline{H}_{n+1} = -b_{n-1}^2 \det H_{n-1} .$\\
Let $\epsilon_k$ be the sign of $\det H_k$.
The sign sequences of the two determinant sequence above are as follows:\\
$ + , \epsilon_1, \epsilon_2,\cdots, \epsilon_{n-1}, \epsilon_n $,\\
$ + , \epsilon_1,\epsilon_2,\cdots, \epsilon_{n-1}, \rho \; , - \epsilon_{n-1}, - \epsilon_n $.\\
where $\rho$ is the sign of $\overline{H}_n$.  It is clear that independent of the value $\rho$ the number of sign changes in the second sequence is one more than in the first.
\end{proof} 
\noindent
In the case of even $n$ one can  meet so-called zigzag-trains (as in the circle case, discussed in  (\cite{Si-circle})): Start with $P_1$ and $P_2$: construct $P_3$  by the 
parallel criterion, and continue in this way: $P_4$, etc.  For some $k$ switch to the condition $P_{k+1}= P_{k-1}$ and continue with 
$P_{k+2} =  P_{k-2}$ until  we arrive in $P_1$.  One can also put some zig-zags in between (does not matter where).
By moving $P_1$ and $P_2$ one gets 2-dimensional families of polygons: {\it zigzag trains.}

\smallskip
\noindent
Special zigzag-trains arise from  two different  points on the curve. The  2-gon $P_1P_2$ is always critical. Adding zigzags give critical 4-gons $P_1P_2P_1P_2$, etc.    a series of non-isolated critical  polygons. Also the case when all points coincide is a non-isolated  critical polygon. So there are plenty of non-isolated critical polygons!  Their
(Bott)-Morse theory can become very complicated.

\subsection{Polygons in a circle or ellipse}
In \cite{Si-circle}
we give a complete description of all critical polygons and indices. 
The main theorem  gives geometric criteria for the critical points and determines also the Hesse matrix at those points. Most of the critical points are of Morse type and look as a regular star, but several  of them have zigzag behaviour. The Morse index is determined  by  combinatorial data.  We give a summarized version where $\alpha_i = \angle P_i M P_{i+1}$ and $M$ is the center of the circle. 

\begin{theorem}\label{t:mainthm}

The signed area function for polygons on a circle (defined on the reduced configuration space) has critical points iff all $|\alpha_i |$ are equal. 
These critical points are 
isolated or (if the number of vertices $n =$ even) contain also a 1-dimensional singular set. More precise
\begin{itemize}
\item[1.] The isolated singularity types are regular stars, zigzag stars and if $n=$odd also degenerate stars,
\item[2.] All regular and zigzag stars  are Morse critical points,
\item[3.] Degenerate stars are  degenerate isolated  critical points if $n$ is odd.  
\item[4.] The non-isolated case only occurs if  n = even and 
includes the complete fold, zigzag trains and degenerate stars.
The non-isolated part of the critical set contains $\tbinom{n}{\frac{n}{2}}$ branches, which meet only  at the complete fold and the degenerate stars.
\end{itemize}
\end{theorem}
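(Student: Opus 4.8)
\emph{Proof strategy.}\; The full argument is carried out in \cite{Si-circle}; the plan is to specialise the tools of Section~\ref{s:general} to a single circle and then run a combinatorial analysis. First I would fix coordinates: take the circle to be the unit circle centred at $M$, parametrized by arc length, so that a polygon corresponds to angles $\theta_1,\dots,\theta_n$ and the reduced configuration space $(S^1)^{n-1}$ is obtained by fixing $\theta_1$. By Theorem~\ref{t:crit_va}, criticality at vertex $i$ means either $P_{i-1}=P_{i+1}$ or the tangent at $P_i$ is parallel to the chord $P_{i-1}P_{i+1}$. On a circle the tangent at $P_i$ is orthogonal to the radius $MP_i$, and a chord $P_{i-1}P_{i+1}$ is orthogonal to $MP_i$ exactly when the two central angles $\angle P_{i-1}MP_i$ and $\angle P_iMP_{i+1}$ agree; the alternative $P_{i-1}=P_{i+1}$ forces these angles equal as well (now with the signed steps $\theta_i-\theta_{i-1}$ and $\theta_{i+1}-\theta_i$ opposite rather than equal). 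Hence criticality at every vertex is equivalent to $|\alpha_1|=\dots=|\alpha_n|$, which is the first assertion.

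Next I would fix the common value $\alpha=|\alpha_i|\in[0,\pi]$ and record the sign pattern $\sigma_i=\operatorname{sign}(\theta_{i+1}-\theta_i)\in\{\pm1\}$; the closing relation $\sum_i(\theta_{i+1}-\theta_i)=0$ then reads $\bigl(\sum_i\sigma_i\bigr)\alpha\equiv 0\pmod{2\pi}$. Analysing this equation gives the classification. When $\sum_i\sigma_i\neq0$ (automatic when $n$ is odd) the value $\alpha$ is forced to be one of finitely many rational multiples of $\pi$, producing the isolated critical polygons: regular stars when $\sigma$ is constant, zigzag stars otherwise, and the degenerate stars when $n$ is odd. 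When $\sum_i\sigma_i=0$ (only possible for $n$ even) the value $\alpha$ is a free parameter, so each sign pattern with $n/2$ entries $+1$ contributes a one-parameter family, for $\binom{n}{n/2}$ branches. I would then check that two distinct branches can produce the same polygon only when $\alpha\in\{0,\pi\}$, so that $+\alpha$ and $-\alpha$ steps collapse: namely at the configuration with all vertices coinciding and at the complete fold (vertices alternating between two antipodal points). This settles items (1) and (4).

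For the Hessians I would substitute the circle data into Proposition~\ref{hessn}: with arc-length parametrization $\kappa_i\equiv1$, so $b_i=\sin\alpha_i=\pm\sin\alpha$ and $a_i=-\epsilon_i l_i$ with $\|P_{i-1}P_{i+1}\|\in\{2\sin\alpha,0\}$; hence $H=\sin\alpha\cdot S$ where $S$ is a symmetric corner-tridiagonal matrix of $0$'s and $\pm1$'s determined by $\sigma$. For a regular star $S$ is a signed circulant, diagonalised via roots of unity, giving non-degeneracy away from the exceptional $\alpha$ together with the stated index; for a zigzag star I would apply the Sylvester rule~\eqref{eq:sylv} to $S$, whose leading principal minors satisfy a short two-term recursion in the $\sigma_i$, to conclude that the critical point is Morse and to read off its index combinatorially, which is item (2). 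For a degenerate star ($n$ odd) $S$ has a kernel, and to see that the critical point is still isolated I would invoke the third-order terms from the higher-order-approximation subsection (on a circle $\dot\kappa_i\equiv0$, so only the $\kappa_i\cos\alpha_i(t_i^2t_{i+1}-t_it_{i+1}^2)$ part survives) and verify that the induced cubic on $\ker S$ has an isolated zero, which is item (3).

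The hard part will be the combinatorics of the last two steps: enumerating the sign patterns and their coincidences precisely enough that the branch count $\binom{n}{n/2}$ and the claim that branches meet only at the complete fold and the degenerate stars both come out exactly, and extracting a clean closed form for the index of a zigzag star from the Sylvester-minor recursion of the corner-tridiagonal sign matrix. The recursion itself is elementary, but tracking the sign changes across a cyclic pattern is delicate, and the even/odd dichotomy has to be threaded through the entire argument.
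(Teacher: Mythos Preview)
The paper does not prove this theorem here; it is imported wholesale from \cite{Si-circle}, with the present text only supplying the statement and a pointer to that reference (and to \cite{vdKS} for the index of the degenerate star). So there is no in-paper argument to compare your sketch against.

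That said, your outline is exactly the natural specialisation of Theorem~\ref{t:crit_va} and Proposition~\ref{hessn} to the circle, and it is essentially the route taken in \cite{Si-circle}: the equal-angle criterion from the parallel condition, the sign-pattern bookkeeping with the closing constraint $(\sum_i\sigma_i)\alpha\equiv 0$, the circulant/tridiagonal Hessian $H=\sin\alpha\cdot S$, and the Sylvester sign-change count for indices. Two places where your sketch would need real work to close are: (i) the branch analysis for $n$ even---you must check carefully that two balanced sign patterns yield the same polygon only at $\alpha=0$ and $\alpha=\pi$, and reconcile this with the paper's terminology (your ``all vertices coincide'' is the complete fold, and the antipodal alternation is the degenerate star for even $n$); (ii) item~(3) for $n$ odd, where the kernel of $S$ at a degenerate star is not one-dimensional, so ``the induced cubic has an isolated zero'' is a genuine computation on a multi-variable cubic. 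In \cite{Si-circle} this last point is handled not by a direct cubic argument but via the Eisenbud--Levine--Khimshiashvili local index formula (with the combinatorics pushed to \cite{vdKS}), which is rather different from what you propose.
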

\begin{figure}[ht]
	\centering
	\includegraphics[width=1.05 \textwidth]{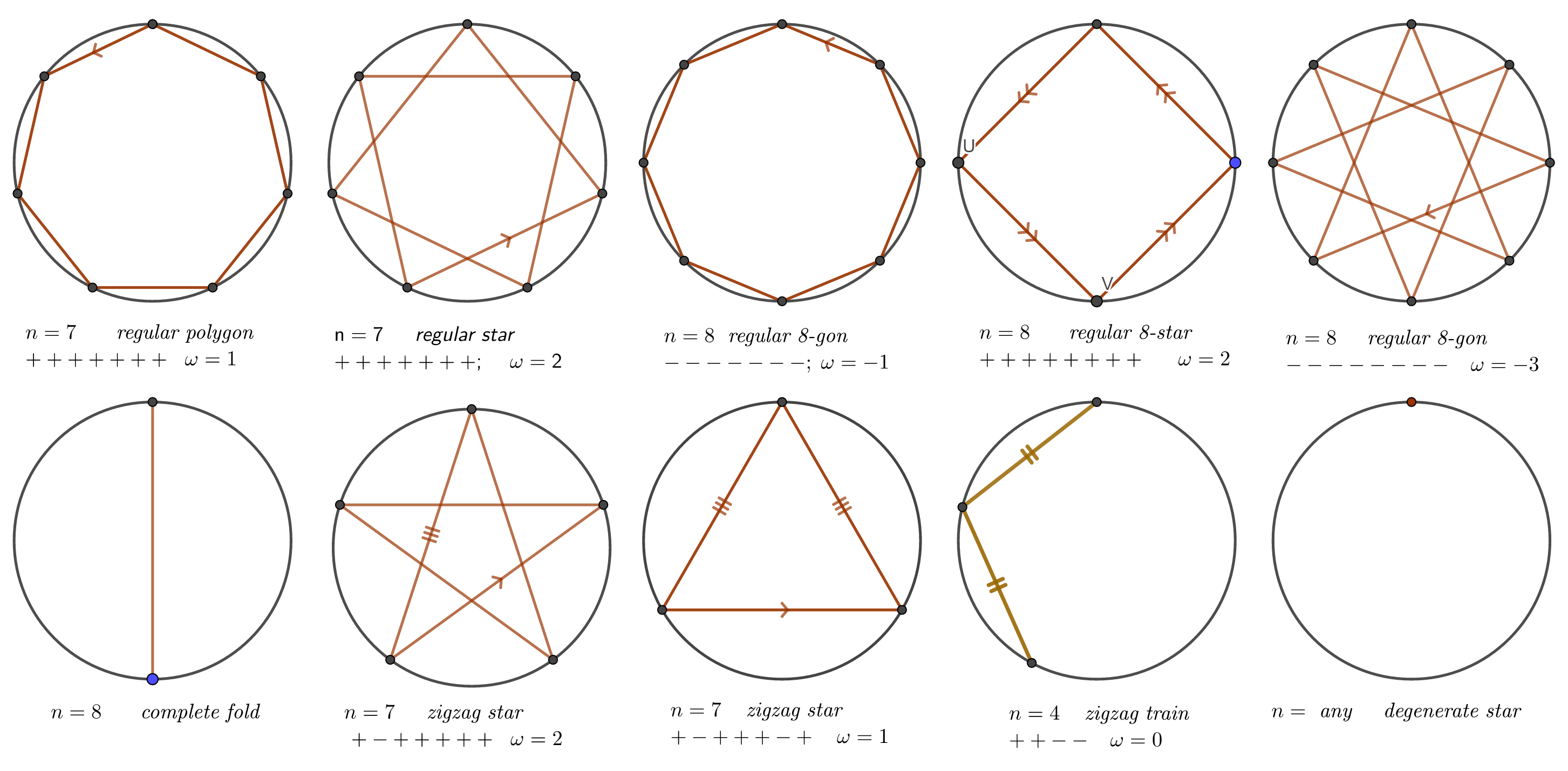}
	\caption{Some critical configurations}
	\label{fig:stars}
\end{figure}

\noindent
We computed also the index of the gradient vector field at the degenerate star by Euler-characteristic arguments. In section 3  of \cite{Si-circle} we discussed  the Eisenbud-Levine-Khimshiashvili method to calculate this index. This related nicely to a combinatorial question, which is solved in \cite{vdKS}.

\smallskip
\noindent
Note, that the problem of extremal area polygons in an ellipse is also solved due to the existence of an area preserving affine map.

%%%%%%%%%%%%%%%%%%%%%%%%%%%%%%%%%%%%%%%%%%%%%%
\section{Piecewise differentiable curves}\label{s:piecewise}
%%%%%%%%%%%%%%%%%%%%%%%%%%%%%%%%%%%%%%%%%%%%%%%%
In many situations piecewise smooth curves occur. These are differentiable curves with finitely many  (break)  points, where only the right-derivative and the left
derivative exist, $C_{i,-}^{'}$, resp.  $C_{i,+}^{'}$. We denote the corresponding tangent vectors by $T_{i,-}$, resp $T_{i,+}$.

\smallskip
\noindent
We can  determine the critical points  of  $\mathcal A$ with the help of generalized derivatives, e.g. the Clark subdifferential. 
The generalized derivative of $C(t)$ at a break point is given by    $\delta C =  ch(T_{-},T_{+})$,  the  convex hull of the right and the left tangent vector; the corresponding cone in the tangent spaces is denote by $TC$.
We avoid $T_{-} + T_{+} = 0$  !

\smallskip
\noindent
The area function $\mathcal{A}$ is an example of a `continuous selection'. Its critical point and Morse theory are especially studied in \cite{JP} and \cite{APS}. A continuous function $f$ is called a continuous  selection of functions
$f_1,\cdots,f_m$ if $I(x) =\{i \in \{1,\cdots,m\} | f_i(x) = f(x)\}$ is non-void . The set $I(x)$ is called the active index set of $f$ at the point $f$.

\smallskip
\noindent
 If all the functions $f_i$ are smooth ($C^1$ ) then  $f$ is locally Lipschitz continuous and the Clark subdifferental of $f$ is given by 
$$ \delta f (x) = ch \{\nabla  f_i (x) | i \in \hat{I}(x\},$$ where $\hat{I}(x) =\{ i | x \in cl  \; int \{x |f(x) = f_i (x) \}\} $. \\
Subdifferentials satisfy the usual calculus rules: vectors replaced by sets.    

\smallskip
\noindent
A  point $x_0$ is called a critical point of a locally Lipschitz  continuous function iff $O \in \delta f (x_0)$.\\
 Locally Lipschitz continuous functions satisfy the first Morse lemma:  No critical points imply a (topological) product structure.
We apply this to $\mathcal A$:

\begin{theorem} \label{t:ns-crit_va}
Let $C_1,\cdots, C_n$ be piecewise smooth  curves in the plane. $\mathcal A$   has a critical point at the polygon $P_1 \cdots P_n$ iff 
$O \in \delta C_i \times ( C_{i+1} - C_{i-1}) $ for all $i$   at $(t_1,\cdots,t_n.$  with $P_i = C(t_i)$\\
This means:

 \begin{itemize}
\item $P_{i+1}=P_{i-1}$  or
\item $ P_{i-1}P_{i+1} \in   TC(P_i)$
\end{itemize}
\end{theorem}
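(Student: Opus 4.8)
The plan is to run the very same argument as in Theorem~\ref{t:crit_va}, with the ordinary partial derivative replaced by the Clarke subdifferential, and then to translate the resulting membership condition into the stated geometric alternative. Recall from the discussion preceding the statement that $\mathcal A$, being a continuous selection of $C^1$ functions, is locally Lipschitz and Clarke-regular, and that by definition a point $x_0=(t_1,\dots,t_n)$ is critical for $\mathcal A$ iff $O\in\delta\mathcal A(x_0)$. So the theorem reduces to computing $\delta\mathcal A(x_0)$ and reading off when $O$ belongs to it.

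The key step is the computation of $\delta\mathcal A(x_0)$ via its product structure. Exactly as in the proof of Theorem~\ref{t:crit_va}, the variable $t_i$ occurs only in the two summands $C_{i-1}(t_{i-1})\times C_i(t_i)$ and $C_i(t_i)\times C_{i+1}(t_{i+1})$, whose sum is $C_i(t_i)\times(C_{i+1}-C_{i-1})$; since $C_{i+1}-C_{i-1}$ does not depend on $t_i$, this is a \emph{linear} function of the vector $C_i(t_i)$. Hence the only non-smoothness of $\mathcal A$ in the $t_i$-direction is the one inherited from a possible break point of $C_i$, and near $x_0$ the function $\mathcal A$ is the continuous selection of the finitely many smooth functions $g_\sigma$ obtained by replacing each $C_i$ having a break at $t_i$ by one of its two one-sided smooth branches, $\sigma$ ranging over the product of the two-element sets $\{-,+\}$ attached to the break indices. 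Since the $i$-th coordinate of $\nabla g_\sigma(x_0)=\big(C'_{i,\sigma_i}\times(C_{i+1}-C_{i-1})\big)_i$ depends only on $\sigma_i$, and every branch $g_\sigma$ is active at $x_0$ (each orthant $\{$side $\sigma_j$ in the $j$-th break variable$\}$ has $x_0$ in the closure of its interior, and on it $\mathcal A=g_\sigma$), the Clarke subdifferential -- equal to $\mathrm{ch}\{\nabla g_\sigma(x_0)\}$ by regularity, cf.~\cite{JP},~\cite{APS} -- is the product
$$\delta\mathcal A(x_0)=\prod_{i=1}^n\Big(\delta C_i(t_i)\times(C_{i+1}-C_{i-1})\Big),\qquad \delta C_i(t_i)=\mathrm{ch}(T_{i,-},T_{i,+}),$$
the $i$-th factor being the singleton $\{C_i'\times(C_{i+1}-C_{i-1})\}$ at a smooth $t_i$. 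Therefore $O\in\delta\mathcal A(x_0)$ iff $O\in\delta C_i\times(C_{i+1}-C_{i-1})$ for every $i$.

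Finally I would read off the geometry of each of these $n$ conditions. Rescaling the one-sided tangents does not change whether the cross product vanishes, so $O\in\delta C_i\times(C_{i+1}-C_{i-1})$ holds iff either $C_{i+1}-C_{i-1}=0$, i.e.\ $P_{i+1}=P_{i-1}$, or the segment $[\,T_{i,-},T_{i,+}\,]$ meets the line $\mathbb R\,(C_{i+1}-C_{i-1})$, i.e.\ $P_{i-1}P_{i+1}\in TC(P_i)$ (up to replacing this cone by its reflection; in the smooth case $TC(P_i)=\mathbb R\,T_i$ and one recovers exactly Theorem~\ref{t:crit_va}). The standing hypothesis $T_{i,-}+T_{i,+}\ne O$ guarantees $TC(P_i)$ is a proper convex cone, so the statement is unambiguous.

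The step I expect to be the main obstacle is the product formula for $\delta\mathcal A(x_0)$: one must justify that the Clarke subdifferential is \emph{exactly} the product of the one-variable subdifferentials rather than merely contained in it. This requires checking that all $2^k$ selection branches are genuinely active at $x_0$ -- so that the set of limiting gradients is a full combinatorial grid, whose convex hull factors as a product of segments -- and invoking Clarke-regularity of continuous selections of $C^1$ functions to upgrade the generic inclusion $\delta(f+g)\subseteq\delta f+\delta g$ to an equality. Once this is in place, the remainder is the same bookkeeping as in the smooth case.
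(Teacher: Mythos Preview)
The paper does not actually supply a proof of this theorem: after setting up the Clarke subdifferential and the continuous-selection formula $\delta f(x)=\mathrm{ch}\{\nabla f_i(x)\mid i\in\hat I(x)\}$, it simply states the result as the direct specialization of Theorem~\ref{t:crit_va} to the piecewise-smooth setting, with the parallel condition ``replaced by the tangent cone condition''. So there is nothing to compare against at the level of argument.

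Your proposal is correct and is exactly the argument one would write to fill in what the paper leaves implicit. The one point you rightly flag---that $\delta\mathcal A(x_0)$ is the \emph{product} of the one-variable subdifferentials---is indeed the only place requiring care, and your justification (all $2^k$ selection branches are active, their gradients form a coordinate grid because the $i$-th component of $\nabla g_\sigma$ depends only on $\sigma_i$, and the convex hull of a Cartesian grid is the product of the coordinate convex hulls) is sound. The translation to the geometric alternative is also correct, including your parenthetical remark about the reflected cone: the cross-product condition detects parallelism without orientation, so strictly speaking $P_{i-1}P_{i+1}$ lies in $TC(P_i)\cup(-TC(P_i))$; the paper's formulation suppresses this, and the hypothesis $T_{i,-}+T_{i,+}\ne 0$ makes the cone proper so the statement is meaningful.
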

\noindent
The paralell condition is now replaced by the tangent cone condition (Figure \ref{fig:NDpic}).
\begin{figure}[h]
	\centering
		\includegraphics[width=0.85\textwidth]{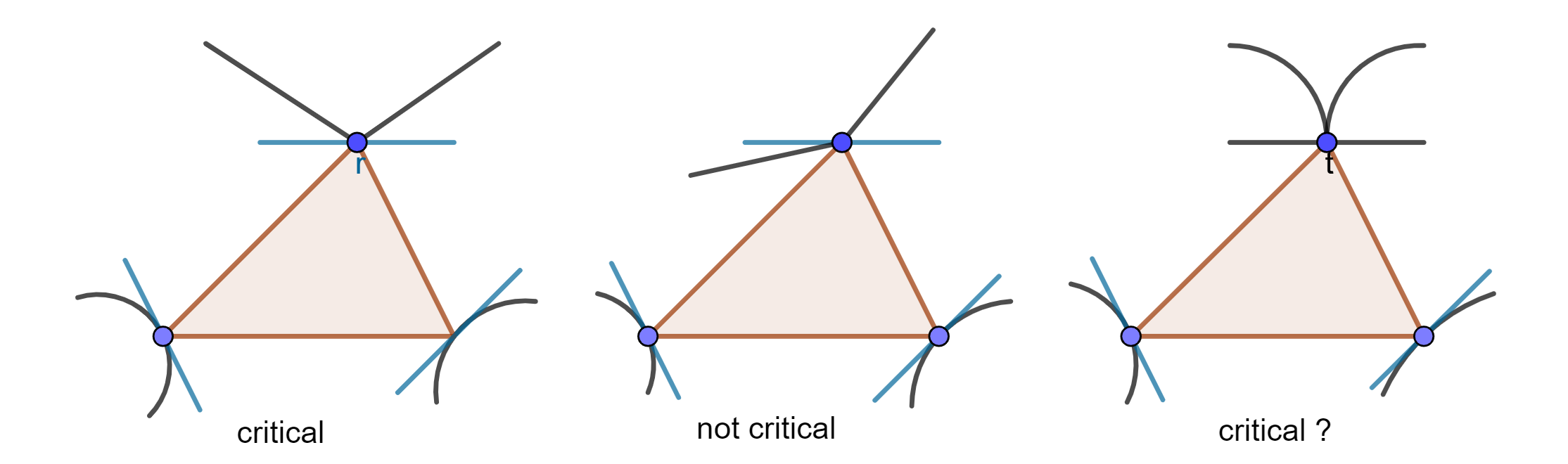}
		\caption{Non smooth critical points.}
	\label{fig:NDpic}
\end{figure}
\noindent
We don't  treat the Morse theory, we restrict ourselves to the following remarks:
Morse theory for continuous selections is developed in \cite{APS}, but in the case of the area function $\mathcal A$  an extension seems to be necessary.\\
A second approach can be sketched as follows.  Use a rounding off curve $\tilde{C}_i$ of $C_i$ in a very small neighbourhood of the breakpoints. It is clear that the critical points of $\mathcal A$  in the two situations are in 1-1 correspondence. We expect even that $\mathcal A$ in the two cases is topologically equivalent. Next one can use smooth Morse theory to determine the type of the critical points.  
We leave this idea for further studies. It seems  interesting in the case that each $C_i$ is a polygon, especially in the case of coinciding curves.

\smallskip
\noindent
The triangle case is intensively studied in computational geometry. Mostly to invent algorithms to select the maximal area triangle in a polygon with many vertices. It could be of interest
to study the critical point theory of triangles, 4-gons and higher.  One can also meet non-isolated singularities.  Is it possible to use the simplicial structure and discrete Morse theory ?

%%%%%%%%%%%%%%%%%%%%%%%%%%%%%%%
\section{Tangential sliding}\label{s:tsliding}
%%%%%%%%%%%%%%%%%%%%%%%%%%%%%%%
\subsection{Critical Points}

We us the notations $C_1,\cdots,C_n$ for the curves, which we give a direction and a parametrization. 
\begin{figure}[ht]
	\centering
		\includegraphics[width=0.40\textwidth]{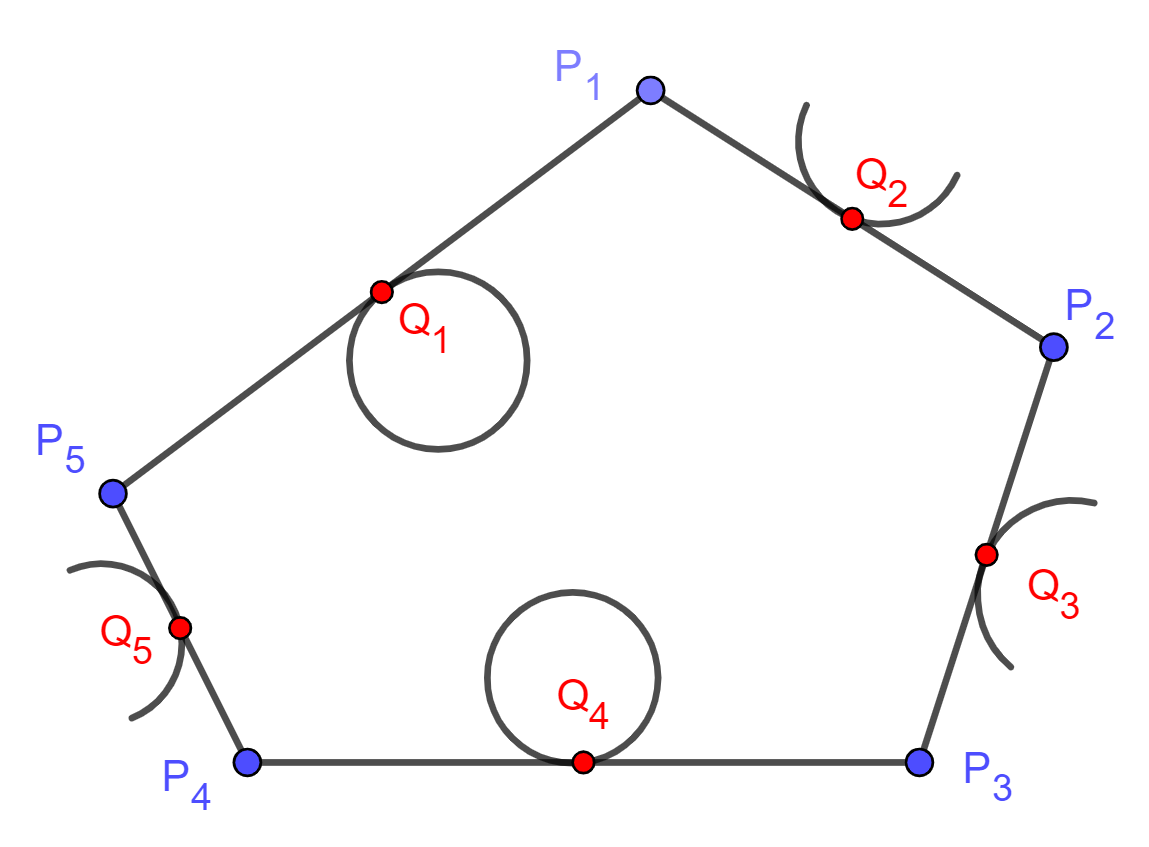}
	\caption{Tangential polygon  in midpoint position}
	\label{fig:slidingArea}
\end{figure}
On each of the curves we consider a point $Q_i$. The tangent lines in $Q_i$ define a polygon, by taking the intersection points   $P_i$ between the tangent lines in $Q_{i}$ and $Q_{i+1}$. The signed area of $P_1P_2\cdots P_n$ defines a function $ t \mathcal{A}: (S^1)^n \to  \mathbb{R}$ . The point $Q_i$ is not defined if the two tangent lines are parallel. One could probably  add the values $\pm \infty$ to the source space.

\begin{theorem}\label{t:crit_ta}
Critical points of  $t \mathcal{A}$ are polygons where the vertices are midpoints or points with vanishing curvature.
\end{theorem}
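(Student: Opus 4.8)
The plan is to compute the partial derivatives of $t\mathcal{A}$ along each curve and read off where they all vanish. Since both alternatives in the conclusion — $Q_k$ a midpoint, or $\kappa_k=0$ — are intrinsic, I may parametrize each $C_i$ by arc length $s_i$, so that $Q_i'=T_i$, $T_i'=\kappa_iN_i$, $N_i'=-\kappa_iT_i$. Write $\ell_i$ for the tangent line at $Q_i$; then $P_i=\ell_i\cap\ell_{i+1}$, so the vertices $P_{k-1}$ and $P_k$ both lie on $\ell_k$ and $P_k-P_{k-1}=L_kT_k$ for a scalar $L_k$ (the signed $k$-th edge length), while $P_{k-2}\in\ell_{k-1}$ and $P_{k+1}\in\ell_{k+1}$. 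Differentiating $t\mathcal{A}=\sum_iP_i\times P_{i+1}$ with respect to $s_k$: only $P_{k-1}$ and $P_k$ depend on $s_k$, so a telescoping computation (as in the proof of Theorem~\ref{t:crit_va}) gives
$$\partial_{s_k}t\mathcal{A}=(\partial_{s_k}P_{k-1})\times(P_k-P_{k-2})+(\partial_{s_k}P_k)\times(P_{k+1}-P_{k-1}).$$

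Next I would exploit the classical fact that, as $Q_k$ moves at unit speed along $C_k$, the tangent line $\ell_k$ rotates (to first order) about the point $Q_k$ with angular speed $\kappa_k$. Since varying $s_k$ leaves $\ell_{k-1}$ fixed and $P_{k-1}\in\ell_{k-1}$, this forces $\partial_{s_k}P_{k-1}=\rho_{k-1}T_{k-1}$; symmetrically $\partial_{s_k}P_k=\sigma_{k+1}T_{k+1}$. Because $P_{k-1}-P_{k-2}\parallel T_{k-1}$ and $P_{k+1}-P_k\parallel T_{k+1}$, the two cross products above collapse onto $T_{k-1}\times(P_k-P_{k-1})=L_k(T_{k-1}\times T_k)$ and $T_{k+1}\times(P_k-P_{k-1})=L_k(T_{k+1}\times T_k)$, so
$$\partial_{s_k}t\mathcal{A}=L_k\big(\rho_{k-1}\,(T_{k-1}\times T_k)+\sigma_{k+1}\,(T_{k+1}\times T_k)\big).$$
To pin down the scalars I would write $P_{k-1}=Q_k+\eta_kT_k$ and $P_k=Q_k+\zeta_kT_k$ (signed coordinates on $\ell_k$ based at $Q_k$), differentiate these identities in $s_k$, insert the Frenet relations, and cross with $T_k$; this isolates $\rho_{k-1}(T_{k-1}\times T_k)=-\eta_k\kappa_k$ and $\sigma_{k+1}(T_{k+1}\times T_k)=-\zeta_k\kappa_k$.

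Substituting yields $\partial_{s_k}t\mathcal{A}=-L_k\,\kappa_k\,(\eta_k+\zeta_k)$. For a nondegenerate polygon ($L_k\neq0$ for all $k$, i.e. consecutive tangent lines nonparallel and no two neighbouring vertices coinciding), the simultaneous vanishing of all partials is therefore equivalent to: for each $k$, either $\kappa_k=0$ (vanishing curvature at $Q_k$) or $\eta_k+\zeta_k=0$, the latter saying precisely that $Q_k$ is the midpoint of the edge $P_{k-1}P_k$ — which is the statement. I expect the delicate part to be isolating the scalars $\rho_{k-1},\sigma_{k+1}$: one must carry the orientation conventions ($T\times N=1$, and the resulting sign of $\kappa$) consistently, and the factors $T_{k\mp1}\times T_k$ produced there have to be exactly those produced by $L_k\,T_{k\mp1}\times T_k$ in the area derivative, so that the sine-of-angle quantities cancel and everything collapses to the clean formula above. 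I would also keep the nondegeneracy hypotheses explicit throughout; genuinely degenerate configurations (coinciding neighbouring vertices, or an inflection point with parallel neighbouring tangents, where $\ell_k$ fails to rotate transversally) fall outside this dichotomy and, as in the smooth vertex-sliding case, would have to be treated separately.
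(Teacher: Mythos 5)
Your proposal is correct and follows essentially the same route as the paper: fix all parameters except $s_k$, note that only $P_{k-1}$ and $P_k$ (hence only the local data at $Q_{k-1},Q_k,Q_{k+1}$) vary, and compute the partial derivative --- the step the paper dismisses as an ``elementary''/folklore triangle computation. Your explicit Frenet-frame calculation yielding $\partial_{s_k}t\mathcal{A}=-L_k\,\kappa_k\,(\eta_k+\zeta_k)$ is exactly that computation written out (and checks out with the convention $T\times N=1$), with the nondegeneracy caveats ($L_k\neq0$, nonparallel consecutive tangent lines) that the paper also implicitly assumes.
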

\begin{proof}
$t \mathcal{A}$ depends on $(t_1,\cdots,t_n)$. Fix next all $t_k$ with $k\ne i$ and compute the partial derivative with respect to $t_i$.  It is sufficient to consider the triangle $Q_{i-1}Q_iQ_{i+1}$. The statement for triangles is folklore (see \ref{ss:triangle}) and follows by elementary computations.  
\end{proof}

\subsection{Triangle case}\label{ss:triangle}
More than 100 years ago E.B. Wilson \cite{wilson} showed, that for triangles on convex curves vertex area $\mathcal{A}$ and tangential area $t \mathcal{A}$ have the same critical points. He used an infinitesimal proof and asked the question: {\it Is there any `easy' way of reaching this result by exclusively analytic methods now in vogue ?}. This follows now anyhow from our Theorem \ref{t:crit_va} and Theorem \ref{t:crit_ta}.
By elementary geometry the midpoint condition for the tangential triangle and the parallel condition are equivalent.

\smallskip
\noindent
If $n >3$ there is no longer the coincidence of critical points for both type of slidings.

\subsection{Related work}

In the paper \cite{cmd}
one considers given angles of the polygon and give geometric conditions for extremal perimeter and area.
The paper \cite{dT} contains not only criteria for extermal perimeter, but at the end also for area.
The midpoint condition is contributed  to M.M. Day (1974).

%%%%%%%%%%%%%%%%%%%%%%%%%%%%%%%%%%%%%%%%%%%%%%%%%%%%%
\section{Towards an Inner Area Billiard}\label{s:billiard}
%%%%%%%%%%%%%%%%%%%%%%%%%%%%%%%%%%%%%%%
The critical polygon construction for the area function $\mathcal A$ can be used to define a new billiard. The approach will be similar to the constructions of (usual) billiard from the perimeter function 
$$\mathcal{P}er = |P_1P_2| + |P_2P_3| +  \cdots |P_{n-1}P_n| + P_nP_1|$$ 
and the outer billiard as explained below. We describe both in cases of a differentiable strict convex curve $C$. As references to billiards  we give \cite{Ta} and \cite{GT}.

\subsection{(Inner) Perimeter Billiard}
For polygons on $C$ the critical points of $\mathcal{P}er$ are determined by the reflection law: Two consecutive edges reflect in the tangent line at the common vertex. One can use the same rule for construction of the billiard. Start with $P_1 \ne P_2$ on $C$, determine $P_3$ via the reflection rule in $P_2$ as intersection of the reflected ray with $C$, etc. The closed orbits correspond to the critical points of $\mathcal{P}er$.  To distinguish from other billiards  we will call this the Inner Perimeter Billiard.

\subsection{(Outer)Area Billiard}
Next we consider polygons where the edges are tangent to the curve $C$. The critical points of $t\mathcal A$  are determined by the mid-point property: Any edge is tangent to $C$ at its mid-point (Theorem \ref{t:crit_ta}). The Outer Area Billiard is defined by that  rule : Start with any point $P_1$ outside the convex region , draw a tangent line to $C$ (there are 2 choices) and take the point $P_2$ on the tangent line such that the point of tangency is the mid-point. Construct $P_3$ via  the (other) tangent line to $C$ and $\cdots$, etc. The closed orbits correspond to the critical points of (outer) area $t \mathcal{A}$.
\subsection{Inner Area Billiard}
\begin{figure}[h]
	\centering
		\includegraphics[width=0.35\textwidth]{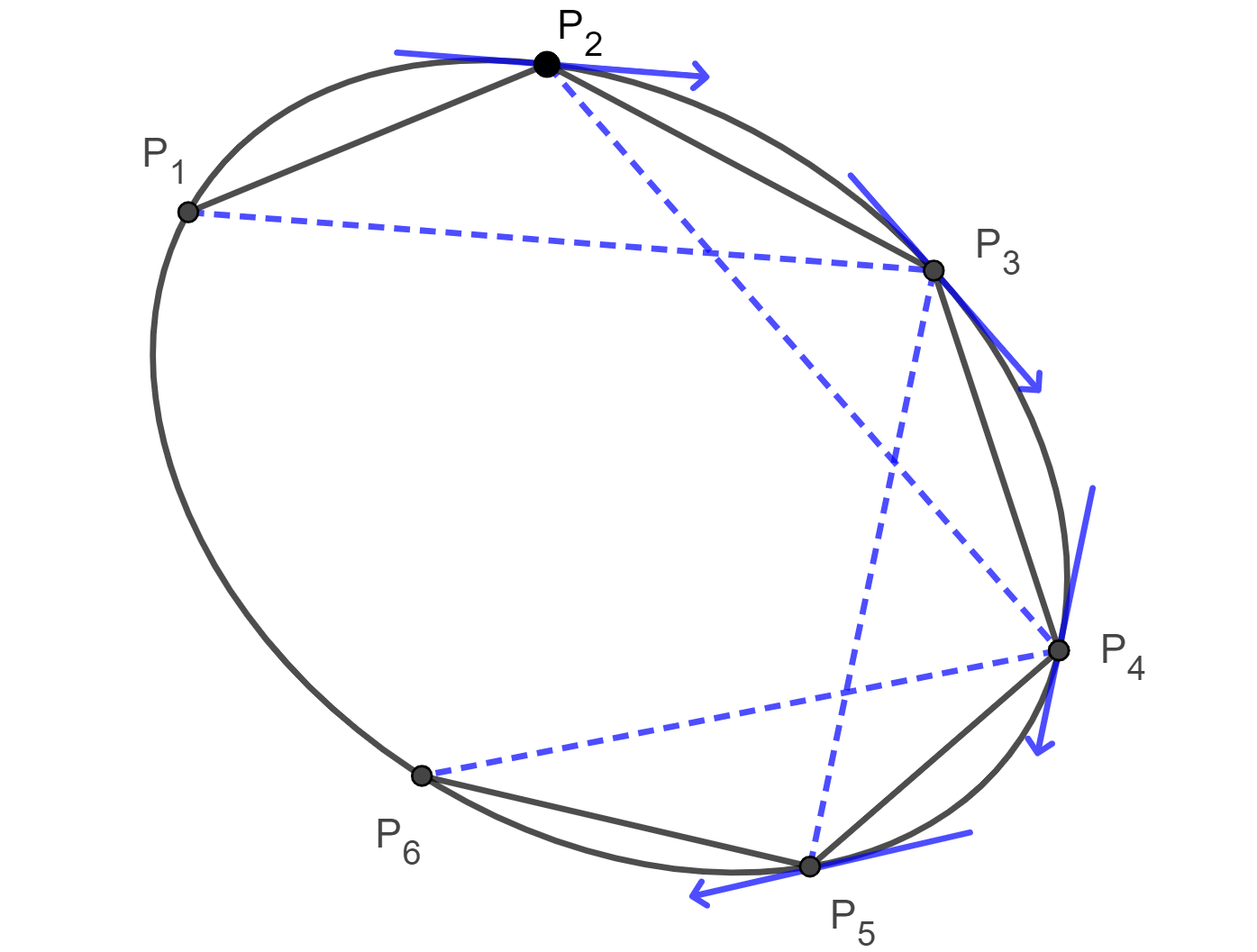}
	\caption{The inner area billiard.}
	\label{fig:billiard}
\end{figure}
Now we describe our new billiard by using the (inner) area function $\mathcal A$. Start with a polygon inscribed in a convex curve. The critical points of $\mathcal A$ are given by the parallel rule:\\
$T(P_i) \parallel  P_{i-1}P_{i+1} \; \; , i=1,\cdots n$. We exclude the zigzag-rule. 

\smallskip
\noindent
Start with $P_1 \ne P_2$ on the curve and construct $P_3$  by intersecting  the line through $P_1$  parallel to $T(P_2)$ with $C$, construct $P_4$ by intersecting the line through $P_2$ parallel to $T(P_3)$, etc. The closed orbits are the critical polygons of (inner) area $\mathcal A$.  We call this billiard the Inner Area Billiard.  

\smallskip
\noindent
It looks interesting to study the properties of this billiard in detail. Questions are:
\begin{itemize}
\item Do caustics exist ?
\item Existence of closed n-orbits with given winding number
\item Other questions in ordinary billiard theory
\end{itemize}
Note that the area function on the ellipse has the property that it has a caustic which is again an ellipse. Each critical polygon is  non-isolated.  The types of critical  orbits follow from \cite{Si-circle}. The caustic exists and is also an ellipse.

%%%%%%%%%%%%%%%%%%%%%%%%%%%%%%%%%%%%%%%%%%%%%%%%%%%%%%%%%%%%%%%%


\begin{thebibliography}{99}
\bibitem[APS]{APS} A.A. Agracev, D. Pallaschke, S. Scholtes,
\emph{On Morse theory for piecewise smooth functions,}
Journal of Dynamical and Control Systems, Vol 3, no. 4, 1997, 4449-469
,
%\bibitem{BM}[BM] R. Bott, Non-degenerate critical manifolds, Ann. Math. 60, 1954, 248--261.

\bibitem[Bl]{isoper} V. Bl\r{a}sj\" o, \emph{The isoperimetric problem}, American Mathematical Monthly, 112, 6 (2005), 525-566.

\bibitem[CMD]{cmd}
W. Ci\'eslak, M. Maksym, D. DeTemple,
\emph{On polygons circumscribing a close convex curve},
Journal of geometry Vol 40 (1991), 26-34.



\bibitem[dT]{dT}
D. DeTemple,
\emph{ The geometry of circumscribing polygons of minimal perimeter,}
 Journal of Geometry, Vol 49 (1994) p. 72-89.

\bibitem[GR]{GR} E. Ghys, A. Ranicki, 
\emph{
Signatures in algebra, topology and dynamics}
Ensaios Mat., 30, Soc. Brasil. Mat., Rio de Janeiro, 2016.


\bibitem [GP]{GP} 
V. Guillemin, A  Pollack,
\emph{Differential Topology},Prentice Hall, 1974.

\bibitem[GT]{GT}
D. Genin, S. Tabachnikov,
{\emph  On configuration spaces of plane polygons, sub-Riemannian geometry and periodic orbits of outer billiards},
 Journal of Modern Dynamics, Vol 1 (2006), p.155-173.

\bibitem[JP]{JP}
H. Th. Jongen, D. Pallaschke,
\emph{On linearization and continous selecctions of functions}
Optimiz. 19 (1988), No. 3, 343-353.

\bibitem[vdKS]{vdKS}
W. van der Kallen, D. Siersma,
\emph{
Subset representations and eigenvalues of the universal intertwining matrix}
 Journal of Pure and Applied Algebra, volume 226 (2022), issue 8, pp. 1 - 6.


\bibitem[KPS]{KPS}
G. Khimshiashvili, G. Panina, D. Siersma, 
\emph{Extremal Area's of Polygons with Fixed Perimeter}.  Zap. Nauchn. Sem. S.-Petersburg. Otdel. Mat. Inst.
Steklov. (POMI), 2019, 841, 136-145.

\bibitem[KP]{khipan}
G. Khimshiashvili, G. Panina, \emph{Cyclic polygons are critical
points of area}.  Zap. Nauchn. Sem. S.-Peterburg. Otdel. Mat. Inst.
Steklov. (POMI), 2008, 360, 8, 238--245.


\bibitem[KS1]{ks1}
G. Khimshiashvili, D. Siersma,
\emph{Critical Configurations of planar Multiple Penduli}.
preprint ICTP  preprint IC/2009/047.

\bibitem[KS2]{ks}
G. Khimshiashvili, D. Siersma, 
\emph{Critical Configurations of planar Multiple Penduli}.
Journal of Mathematical Sciences, Volume 195, No. 2, November 2013, p 198-212.


\bibitem[PZ]{panzh}
G. Panina, A. Zhukova, \emph{Morse index of a cyclic polygon},Cent.
Eur. J. Math., 9(2) (2011), 364-377.

\bibitem[SV]{SV}
D.  Shimamoto, C. Vanderwaart,
\emph{Spaces of Polygons in the Plane and Morse Theory}
The American Mathematical Monthly, Vol. 112, No. 4 (Apr., 2005), pp. 289-310.


\bibitem[Si]{Si-circle}
D. Siersma, \emph{Extremal Area of Polygons, sliding along a circle},
Hokkaido Mathematical Journal, Vol 51 (2022), 175-187, doi:10.14492/hokmj/2020-312.

\bibitem[Ta]{Ta}
S. Tabachnikov, \emph{Geometry and Billiards},
Providence, RI: American Mathematical Society, ISBN 978-0-8218-3919-5

\bibitem[Wi]{wilson}
E.B. Wilson, \emph{Relating to Infinitesimal Methods in Geometry}, The American Mathematical Monthly,
Vol 23, No.5 (May 1917), p. 241-243.



\end{thebibliography}
\end{document}